\newtheorem{theorem}{Theorem}[section]
\newtheorem{lemma}[theorem]{Lemma}
\newtheorem{corollary}[theorem]{Corollary}
\newtheorem{proposition}[theorem]{Proposition}
\numberwithin{equation}{section}
\def\quant{\advance\quantno by1
                         \ifnum\quantno=1\qquad\else\quad\fi\forall }
\renewcommand\Re{\operatorname{\mathrm{Re}}}
\newcommand\rest[1]{\kern-.1em
             \lower.5ex\hbox{$\scriptstyle #1$}\kern.05em}
\newcommand\vett[2] {{#1}_1,\ldots,{#1}_{#2}} 
\newcommand\set[1]{{\left\{#1\right\}}}
\renewcommand\mod[1]{\left\vert{#1}\right\vert}
\newcommand\norm[2]{{\left\Vert{#1}\right\Vert_{#2}}}
\newcommand\wrt{\,\text{\rm d}}
\newcommand\diag{\,\text{\rm diag}}
\newcommand\sgr[1]{(\e^{t#1})_{t\ge 0}}
\newcommand\pint[1]{{[\hskip-2pt[#1]\hskip-2pt]}} 
\newcommand\BN{\mathbb{N}}
\newcommand\BR{\mathbb{R}}
\newcommand\BT{\mathbb{T}}
\newcommand\cG{\mathcal{G}}
\newcommand\cH{\mathcal{H}}
\newcommand\cL{\mathcal{L}}
\newcommand\cP{\mathcal{P}}  
\newcommand\cQ{\mathcal{Q}}
\newcommand\cR{\mathcal{R}}  
\newcommand\cS{\mathcal{S}}  
\newcommand\cT{\mathcal{T}}  
\newcommand\cW{\mathcal{W}}
\newcommand\tr{\mathop{\rm tr}}
\newcommand\ga{\gamma_\infty}
\newcommand\loc {\hbox{{\tiny loc}}}
\newcommand\glob {\hbox{{\tiny glob}}}
\newcommand\1{{\bf 1}} 
\newcommand\funnyk{k\hbox to 0pt{\hss\phantom{g}}}
\newcommand\rmi{\hbox{\rm (i)}}
\newcommand\rmii{\hbox{\rm (ii)}}
\newcommand\rmiii{\hbox{\rm (iii)}}
\newcommand\rmiv{\hbox{\rm (iv)}}
\newcommand\e{{\mathrm e}}
\begin{document}

\author
{Giancarlo Mauceri,  Luana Noselli}


\title{The maximal operator associated to a non-symmetric Ornstein-Uhlenbeck semigroup}
\subjclass[2000]{42B25, 47D03}
\keywords{Ornstein-Uhlenbeck semigroup, maximal operator, weak type.}
\thanks{The authors have received support  by the Italian MIUR-PRIN 2005 project \lq\lq Harmonic Analysis\rq\rq  and by the EU IHP 2002-2006 project \lq\lq HARP\rq\rq}

\title[non-symmetric Ornstein-Uhlenbeck semigroup]{ The maximal operator associated to a non-symmetric Ornstein-Uhlenbeck semigroup}

\maketitle

\begin{abstract}  Let $(\cH_t)_{t\ge 0}$ be the Ornstein-Uhlenbeck semigroup on $\BR^d$ with covariance matrix $I$ and drift matrix $\lambda(R-I)$, where $\lambda>0$ and  $R$ is a skew-adjoint matrix and denote by $\gamma_\infty$  the invariant measure for  $(\cH_t)_{t\ge 0}$. Semigroups of this form are the basic building blocks of Ornstein-Uhlenbeck semigroups which are normal on $L^2(\gamma_\infty)$. We prove that if the matrix $R$ generates a one-parameter group of periodic rotations then the maximal operator $\cH_*f(x)=\sup_{t\ge o}\mod{\cH_tf(x)}$ is of weak type $1$ with respect to the invariant measure $\gamma_\infty$. We also prove that the maximal operator associated to an arbitrary normal Ornstein-Uhlenbeck semigroup is bounded on $L^p(\gamma_\infty)$ if and only if $1<p\le \infty$.

\end{abstract}

\section{Introduction} \label{s:Introduction}
Let $Q$ be a real, symmetric, positive definite $d\times d$-matrix and let $B$ be  a nonzero real $d\times d$-matrix whose eigenvalues have negative real part. Then for every $t\in(0,\infty]$ we can define the family of Gaussian measures $\gamma_t$ on $\BR^d$ with mean zero and covariance operators
\begin{equation}\label{Qt}
Q_{t}=\int_{0}^{t}\e^{sB}Q\e^{sB^{\ast}}\,\wrt s,\quad t\in(0,\infty],
\end{equation}
i.e. the measures
$$
\wrt\gamma_t(x)=(2\pi)^{-d/2} (\det Q_t)^{-1/2}\,\e^{-\frac{1}{2}\langle Q_t^{-1} x,x\rangle}\wrt\lambda(x) \qquad \forall t\in(0,\infty].
$$  The Ornstein-Uhlenbeck semigroup is the family of operators  $(\cH^{Q,B}_t)_{t\geq0}$  defined by
\begin{equation}\label{Ht}
\cH^{Q,B}_t f(x)=\int_{\BR^{d}}f(\e^{tB}x-y)\,\wrt\gamma_t(y)
\end{equation}
on the space $C_b(\BR^d)$ of bounded continuous functions.
The matrices $Q$ and $B$ are called the {\it covariance} and the {\it drift} matrix, respectively.\par
 It is well known that  $\gamma_\infty$ is the unique invariant measure for $\cH^{Q,B}_t$ and that $(\cH^{Q,B}_t)_{t\geq0}$ is a diffusion semigroup on$(\BR^d,\gamma_\infty)$ (see for instance \cite{DZ}). Thus formula (\ref{Ht})  defines a semigroup of positive contractions on $L^p(\gamma_\infty)$ for every $p\ge 1$, which we shall also denote  by $(\cH^{Q,B}_t)_{t\ge 0}$.\par 
  In this paper we are concerned with the boundedness of the maximal operator
 $$
\cH^{Q,B}_*f(x)=\sup_{t\ge 0} \mod{\cH^{Q,B}_t f(x)}.
$$
It is well known that by Banach's principle (see \cite{G}) this maximal operator is a key tool to investigate the almost everywhere convergence of $\cH^{Q,B}_tf$ to $f$ as $t$ tends to $0$ for $f$ in $L^p(\gamma_\infty)$.\par
If the semigroup $(\cH^{Q,B}_t)_{t\ge 0}$ is symmetric, i.e. if $\cH^{Q,B}_t$ is self-adjoint on $L^2(\gamma_\infty)$ for every $t\ge 0$, then $\cH^{Q,B}_*$ is bounded on $L^p(\gamma_\infty)$ for every $p$ in $(1,\infty]$, by the  Littlewood-Paley-Stein theory for symmetric semigroups of contractions on all $L^p$ spaces \cite{S}.  Is the result still true if we drop the symmetry assumption?
 In the same  monograph \cite{S} Stein says that for general diffusion semigroups the condition of self-adjointness cannot be much modified. Indeed if one considers the semigroup of translations $\cT_t f(x)=f(x+t)$ on the one-dimensional torus $\BT$, for every $p$ in $[1,\infty]$ it is easy to construct a function $f$ in $L^p(\BT)$ such that $\sup_{t\geq 0}\mod{T_tf(x)}=\infty$ everywhere.
  Notice that $(\cT_t)_{t\geq0}$ is a semigroup of normal, actually unitary, operators. \par
  However, in Theorem \ref{mgr} below we show that Stein's proof of the maximal theorem for  semigroups of symmetric contractions on all $L^p(\mu)$, $1\le p\le \infty$, can be adapted to semigroups of {\it normal} contractions  such that the generator of the semigroup on $L^2(\mu)$ is a sectorial operator of angle $\phi<\pi/2$. 
Since the generator of the Ornstein-Uhlenbeck on $L^2(\gamma_\infty)$ is sectorial of angle strictly  less than $\pi/2$ this implies that if $(\cH^{Q,B}_t)_{t\ge0}$ is normal on $L^2(\gamma_\infty)$ then the maximal operator $\cH_*$ is bounded on $L^p(\gamma_\infty)$ for every $p$ in $(1,\infty]$.\par
It remains to investigate the boundedness of  the Ornstein-Uhlenbeck maximal operator $\cH^{Q,B}_*$ on $L^1(\gamma_\infty)$.  In Section \ref{s: strong} we show that $\cH^{Q,B}_*$ is always  unbounded on $L^1(\gamma_\infty)$. This still leaves open the question of the validity of the weak type $1$ estimate
$$
\gamma_\infty\big(\set{x\in\BR^d: \mod{\cH^{Q,B}_* f(x)}>\alpha}\big)\le\frac{C\,\norm{f}{1}}{\alpha}\qquad\forall f\in L^1(\gamma_\infty)\quad\forall \alpha>0.
$$\par
Even in the symmetric case very little is known about the weak type $1$ boundedness of the Ornstein-Uhlenbeck maximal operator. The only result which is known is for the semigroup with covariance matrix $Q=I$ and drift matrix $B=-I$ for which  the weak type $1$ boundedness of $\cH^{Q,B}_*$ is due to B.~Muckenhoupt  \cite{M}  in dimension one and to  P.~Sj\"ogren  \cite{Sj}   in arbitrary dimension. Sj\"ogren's proof was subsequently simplified in \cite{MPS} and \cite{GMMST}.  The arguments in these papers easily extend to the case where $B=-\lambda I$ for some $\lambda>0$. However, already the case where $B$ is a diagonal matrix with at least two different eigenvalues seems to require new ideas. \par

In this paper we investigate the weak type $1$ estimate for the maximal operator associated to  the Ornstein-Uhlenbeck semigroup with covariance matrix $Q=I$ and drift $B=-\lambda (I-R)$, where $\lambda>0$ and $R$ is a nonzero real $d\times d$ skew-adjoint matrix. 
The interest of these semigroups is motivated by  the fact that they are the basic building blocks of normal Ornstein-Uhlenbeck semigroups. Indeed, in Section \ref{s:prelim} we show that, after a change of variables, any normal Ornstein-Uhlenbeck semigroup can be written as the product of commuting semigroups of this form.\par
For these \emph{particular} semigroups we shall prove two results. First we shall prove that  the ``truncated" maximal operator
$$
\cH^{Q,B}_{*,{[0,T]}}f(x)=\sup_{t\in[0,T]}\mod{\cH^{Q,B}_t\,f(x)} 
$$
 is of weak type $1$.
Second, we shall prove that if the one-parameter group of rotations $(\e^{tR})_
{t\in\BR}$ generated by $R$ is periodic then the full maximal operator $\cH^{Q,B}_*$ is of weak type $1$.\par
Finally we mention that, by using the results of the present paper, in \cite{MN2} we have proved  that first order Riesz transforms associated to the generator of these \lq periodic\rq \, semigroups are of weak type $1$.\par
We now briefly describe the content of the paper. In Section \ref{s:prelim} we characterize the generators of normal Ornstein-Uhlenbeck semigroups and we show that, after a change of coordinates, normal semigroups are the product of commuting semigroups with covariance matrix $Q=I$ and drift $B=-\lambda(I-R)$, with $\lambda>0$ and $R$ a real skew-adjoint matrix. \par
In Section \ref{s: kernel} we give an explicit representation of the integral kernel of these semigroups with respect to the invariant measure. We show that, modulo an orthogonal change of coordinates,   the semigroup kernel is the product of the kernel of a symmetric semigroup and some two-dimensional kernels. Ultimately, this  will enable us to reduce the problem of the weak type $1$ boundedness of the maximal operator to proving estimates of kernels defined on $\BR^2\times\BR^2$.\par
In Section \ref{s: strong} we study the boundedness of the maximal operator $\cH^{Q,B}_*$ on $L^p(\gamma_\infty)$, $1\le p\le \infty$, for \emph{arbitrary} $Q$ and $B$. We prove that the truncated maximal operator is always unbounded on $L^1(\gamma_\infty)$ and that, when the semigroup is normal, the full maximal operator is bounded on $L^p(\gamma_\infty)$, $1<p\le \infty$.\par
Finally, in Section \ref{s:weaktype} we prove the weak type estimate for the truncated and the full maximal operator when $Q=I$ and $B=-\lambda(I-R)$. By the results of Section \ref{s: kernel} the kernel of the semigroup is a perturbation of the kernel of a symmetric semigroup. When $t$ is close to zero  the perturbation is small and the kernel of the nonsymmetric semigroup can be controlled by the kernel of the symmetric semigroup. The same thing happens in the periodic case when $t$ is close to an integer multiple of a period. This enables us to apply  the results of \cite{GMMST} to prove the weak type estimate for the truncated maximal operator and of the full maximal operator in the periodic case.

\section{Preliminaries} \label{s:prelim}
The Schwartz space $\cS(\BR^d)$ is a core for the infinitesimal generator $\cL_{Q,B}$ of the semigroup $(\cH^{Q,B}_t)_{t\ge0}$ on $L^p(\gamma_\infty)$ for every $p$, $1<p<\infty$, and
$$
\cL_{Q,B} f= \frac{1}{2} \tr(Q\nabla^2)f +\langle Bx,\nabla\rangle f \qquad\forall f\in\cS(\BR^d).
$$\par
 By a result of G.~Metafune, J.~Pr\"uss, A.~Rhandi and R.~Schnaubelt (see \cite[Lemma 2.2]{MPRS}) there exists a linear change of coordinates in $\BR^d$ which allows us to reduce the analysis of the operator $\cL_{Q,B}$ to the case where $Q=I$ and $Q_\infty$ is a diagonal matrix. Indeed, let $M_1$ be an invertible real matrix such that $M_1QM^*_1=I$ and $M_2$ an orthogonal matrix such that $M_2M_1Q_\infty M^*_1M_2=\diag(\lambda_1,\ldots,\lambda_d):=D_\lambda$ for some $\lambda_j>0$. Then, if we take $M=M_2M_1$ and we denote by $\Phi_M:\cS(\BR^d)\to \cS(\BR^d)$ the similarity transformation defined by $\Phi_M f(x)=f(M^{-1}x)$ we have that $\cL_{Q,B}=\Phi_M^{-1} \cL_{I,\tilde B}\,\Phi_M$ where
\begin{equation}\label{LtiBti}
\tilde B=-\frac{1}{2}D_{1/\lambda}+R
\end{equation}
and $R$ is a matrix such that 
\begin{equation}\label{skew}
RD_\lambda=-D_\lambda R^*.
\end{equation}
 The invariant measure for the semigroup generated by $\cL_{I,\tilde B}$ is
$$
\wrt\tilde\gamma_\infty(x)=(2\pi)^{-d/2}(\det D_\lambda)^{-1/2} \e^{-\frac{1}{2}\langle D_{\lambda}^{-1}x,x\rangle}\wrt\lambda(x).
$$
Moreover $\tilde\gamma_\infty(E)=\gamma_\infty(M^{-1}E))$ for every Borel subset $E$ of $\BR^d$ and $\Phi_M$ extends to an isometry of $L^{p}(\gamma_\infty)$ onto $L^{p}(\tilde\gamma_\infty)$.\par
By (\ref{LtiBti}) we may write the operator $\cL_{I,\tilde B}$ as the sum
\begin{equation}\label{s+a}
\cL_{I,\tilde B}=\cL^0+\cR,
\end{equation}
where $\cL^0=\frac{1}{2}\Delta-\frac{1}{2}\langle D_{1/\lambda}x,\nabla\rangle$ and $\cR=\langle Rx,\nabla\rangle$ are the symmetric and the antisymmetric part of $\cL_{I,\tilde B}$ on $L^2(\tilde\gamma_\infty)$, respectively. Thus, the operator $\cL_{Q,B}$ is symmetric on $L^2(\gamma_\infty)$ if and only if $R=0$. \par
Let $(\cH^{I,\tilde B}_t)_{t\ge 0}$ be the semigroup generated by $\cL_{I,\tilde B}$ and $\cH^{I,\tilde B}_*$ the corresponding maximal operator. Clearly, $\cH^{Q,B}_*$ is bounded on $L^p(\gamma_\infty)$ or of weak type $1$ with respect to $\gamma_\infty$ if and only if $\cH^{I,\tilde B}_*$ is bounded on $L^p(\tilde\gamma_\infty)$ or of weak type $1$ with respect to $\tilde\gamma_\infty$. 
Thus, the analysis of the maximal operator $\cH^{Q,B}_*$ may be reduced to the case where $Q=I$ and $\tilde Q_\infty=\diag\set{\lambda_1,\ldots,\lambda_d}$ for some $\lambda_j>0$. 
\begin{proposition}\label{normal}  Let $\tilde B$, $D_\lambda$ and $R$ be the matrices associated to $Q$ and $B$ as in (\ref{LtiBti}). Denote by $\cL^0$ and $\cR$ the symmetric and the antisymmetric part of $\cL_{I,\tilde B}$ as in (\ref{s+a}). Then
the following properties are equivalent
\begin{itemize}
\item[\rmi] the semigroup $(\cH^{Q,B}_t)_{t\ge0}$ is normal on $L^2(\gamma_\infty)$;
\item[\rmii] the symmetric and the antisymmetric parts of $\cL_{I,\tilde B}$ commute; i.e.
$$
[\cL^0,\cR]\phi=0\qquad\forall \phi\in \cS(\BR^d);
$$
\item[\rmiii] $R+R^*=0$;
\item[\rmiv]  $D_\lambda$ and $R$ commute.
\end{itemize}

\end{proposition}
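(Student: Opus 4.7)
The plan is to establish the chain of equivalences in the order (iii) $\Leftrightarrow$ (iv), then (ii) $\Leftrightarrow$ (iii), and finally (i) $\Leftrightarrow$ (ii), using the constraint (\ref{skew}) throughout. The equivalence (iii) $\Leftrightarrow$ (iv) is an immediate algebraic consequence of (\ref{skew}) together with the invertibility of $D_\lambda$: assuming $R^{*}=-R$ and substituting into $RD_\lambda=-D_\lambda R^{*}$ gives $RD_\lambda = D_\lambda R$; conversely, $[D_\lambda,R]=0$ combined with (\ref{skew}) yields $D_\lambda(R+R^{*})=0$, whence $R+R^{*}=0$.

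For (ii) $\Leftrightarrow$ (iii) I would compute the commutator $[\cL^{0},\cR]$ explicitly on $\cS(\BRd)$ by splitting $\cL^{0} = \tfrac{1}{2}\Delta + X$ with $X = -\tfrac{1}{2}\langle D_{1/\lambda}x,\nabla\rangle$. A short calculation, using that $\cR$ and $X$ both have linear coefficients, gives
$$
[\tfrac{1}{2}\Delta,\cR] = \tfrac{1}{2}\tr\big((R+R^{*})\nabla^{2}\big), \qquad [X,\cR] = \tfrac{1}{2}\langle(D_{1/\lambda}R - RD_{1/\lambda})x,\nabla\rangle.
$$
The right-hand side is the sum of a constant-coefficient second-order operator and a first-order operator with linear coefficients, so the two parts cannot cancel against each other. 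Hence $[\cL^{0},\cR]=0$ on $\cS(\BRd)$ is equivalent to the two conditions $R+R^{*}=0$ and $[D_{1/\lambda},R]=0$; the first is exactly (iii), and the second is then automatic from (iii) via (\ref{skew}), as in the previous paragraph.

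Finally, since $\Phi_{M}$ is a unitary isomorphism between $L^{2}(\gamma_\infty)$ and $L^{2}(\tilde\gamma_\infty)$, property (i) is equivalent to the normality on $L^{2}(\tilde\gamma_\infty)$ of the semigroup generated by $\cL_{I,\tilde B} = \cL^{0}+\cR$. I would verify on the core $\cS(\BRd)$ that $(\cL^{0})^{*} = \cL^{0}$ and $\cR^{*} = -\cR$; for $\cR$ this follows from an integration by parts against $\tilde\gamma_\infty$, using $\nabla\tilde\gamma_\infty = -D_\lambda^{-1}x\,\tilde\gamma_\infty$, which produces boundary contributions
$$
\tr(R)\int fg\wrt\tilde\gamma_\infty \qquad\text{and}\qquad \int\langle Rx,D_\lambda^{-1}x\rangle fg\wrt\tilde\gamma_\infty,
$$
both of which vanish because (\ref{skew}) forces $D_\lambda^{-1}R$ to be antisymmetric. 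Having identified $\cL_{I,\tilde B}^{*} = \cL^{0}-\cR$, a purely algebraic expansion shows that the normality relation $\cL_{I,\tilde B}\cL_{I,\tilde B}^{*} = \cL_{I,\tilde B}^{*}\cL_{I,\tilde B}$ collapses to $[\cL^{0},\cR]=0$, which is (ii). The main subtlety lies here: one must pass from the commutator identity on the core $\cS(\BRd)$ to genuine normality of the (unbounded) generator on $L^{2}(\tilde\gamma_\infty)$, which rests on the fact, recalled at the start of Section \ref{s:prelim}, that $\cS(\BRd)$ is a core for $\cL_{I,\tilde B}$.
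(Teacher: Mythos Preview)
Your proof is correct and follows essentially the same approach as the paper: both compute $\cR^{*}=-\cR$ via integration by parts using (\ref{skew}), reduce normality of the generator to $[\cL^{0},\cR]=0$, and then expand this commutator as a sum of a second-order piece governed by $R+R^{*}$ and a first-order piece governed by $[D_{1/\lambda},R]$, the two being independently zero. The only differences are cosmetic (you run the equivalences in a different order and write the second-order term as $\tfrac{1}{2}\tr((R+R^{*})\nabla^{2})$ rather than $\langle\nabla,R\nabla\rangle$), and you are in fact slightly more careful than the paper in flagging the passage from the commutator identity on the core $\cS(\BRd)$ to normality of the unbounded generator.
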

\begin{proof}
  We claim that $\cL_{I,\tilde B}^*=\cL^0-\cR$. Indeed, on the one hand $(\cL^0)^*=\cL^0$ because $\cL^0$ is symmetric. On the other hand, integrating by parts, we get that
\begin{align*}
\cR^*&=-\cR+\langle Rx, D_\lambda^{-1}x\rangle-\tr R \\ 
&=-\cR,
\end{align*}
because $\tr R=0$ and $\langle Rx, D_\lambda^{-1}x\rangle=0$ since $\langle Rx, D_\lambda^{-1}x\rangle=\langle x, R^*D_\lambda^{-1}x\rangle=-\langle x, D_\lambda^{-1}Rx\rangle=-\langle D_\lambda^{-1} x, Rx\rangle$  by (\ref{skew}). \par
The semigroup $(\cH^{Q,B}_t)_{t\ge 0}$ is normal if and only if its generator $\cL_{Q,B}$ on $L^2(\gamma_\infty)$ is normal and this happens if and only if $\cL_{I,\tilde B}$ is normal on $L^2(\tilde\gamma_\infty)$, i.e. 
$[\cL_{I,\tilde B},\cL_{I,\tilde B}^*]\phi=0$ for all $\phi$ in $\cS(\BR^d)$. Now
\begin{align*}
[\cL_{I,\tilde B},\cL_{I,\tilde B}^*]=&[\cL^0+\cR,\cL^0-\cR] \\
=&2[\cR,\cL^0]. 
\end{align*}
This shows that \rmi\  and \rmii\  are equivalent. Next observe that
\begin{align}\label{comm}
[\cR,\cL^0]  
=&-\langle \nabla,R\nabla\rangle+\frac{1}{2}\langle(RD_{1/\lambda}-D_{1/\lambda}R)x,\nabla\rangle.
\end{align}
Hence $[\cR,\cL^0]  
$ vanishes  if and only if  $\langle \nabla,R\nabla\rangle$ and $\langle(RD_{1/\lambda}-D_{1/\lambda}R)x,\nabla\rangle$ both vanish, as can be easily seen by fixing any pair of indices $j$, $k$ and an arbitrary point $x_0$ and applying the commutator to a test function $\phi$ which in a neighbourhood of $x_0$ coincides with $(x-x_0)_j(x-x_0)_k$. Now,  $\langle \nabla, R\nabla\rangle $ vanishes if and only if $R+R^*=0$. Thus \rmii\  implies \rmiii. To prove the converse observe that by (\ref{skew}) the identity $R+R^*=0$ implies that $R$ and $D_{\lambda}$ commute. Thus also   $D_{1/\lambda}$ and $R$ commute. Hence $[\cR,\cL^0]=0$ by (\ref{comm}). Finally, if (\rm iv)\ holds then $R+R^*=0$ by (\ref{skew}). This concludes the proof of the proposition.
\end{proof}
In the last part of this section we show that operators of the form $\cL_{Q,B}$ with $Q=I$ and $B=\frac{1}{2\alpha}(R-I)$ where $\alpha>0$ and $R$ is a $d\times d$ skew-symmetric real matrix, are the basic building blocks of normal Ornstein-Uhlenbeck operators. This motivates the interest in studying the maximal operator associated to semigroups generated by them.\par
To simplify notation we write
\begin{equation}\label{L(a,R)}
\cL(\alpha,R)=\cL_{I,\frac{1}{2\alpha}(R-I)}=\frac{1}{2}\Delta-\frac{1}{2\alpha}\langle x,\nabla\rangle+\frac{1}{2\alpha}\langle Rx,\nabla\rangle,
\end{equation}
Let $(\cH^{Q,B}_t)_{t\ge 0}$ be a normal Ornstein-Uhlenbeck semigroup. By (\ref{LtiBti}) after a change of variables we may assume that its generator is of the form
$$
\cL_{I,\tilde B}=\frac{1}{2}\Delta-\frac{1}{2}\langle D_{1/\lambda}x,\nabla\rangle +\langle Rx,\nabla\rangle,
$$ 
where $R+R^*=0$ and  $R$ commutes with $D_{\lambda}$ by Proposition (\ref{normal}). Let $\alpha_1,\ldots,\alpha_\ell$ be the distinct eigenvalues of $D_\lambda$ and let 
$$
D_\lambda=\alpha_1 P_1+\cdots \alpha_\ell P_\ell
$$
be the spectral resolution of $D_\lambda$. The matrix $R$ commutes with the projections $P_j$ and if we set  $R_j={2\alpha}RP_j$ then  $R_j^*=-R^j$ and 
$
R=\frac{1}{2\alpha}\sum_{j=1}^\ell R_j.
$
Thus, denoting by $\Delta_j=\tr(P_j\nabla^2)$ and $\nabla_j=P_j\nabla$ the Laplacian and the gradient with respect to the variables in $P_j \BR^d$, we have
\begin{align*}
\cL_{I,\tilde B}=\sum_{j=1}^\ell \cL(\alpha_j, R_j),
\end{align*}
where
$$
 \cL(\alpha_j, R_j)=\frac{1}{2} \Delta_j-\frac{1}{2\alpha_j} \langle x, \nabla_j\rangle+\frac{1}{2\alpha_j}\langle R_jx,\nabla_j\rangle.
$$
The semigroup generated by $\cL_{I,\tilde B}$ is the product of the commuting semigroups  $(\e^{t\cL(\alpha_j,R_j)})_{t\ge 0}$ generated by  the operators $\cL(\alpha_j,R_j)$, $j=1,\ldots,\ell$, which are therefore the basic building blocks of normal Ornstein-Uhlenbeck semigroups.


 \section{The kernel of the semigroup with respect to the invariant measure} \label{s: kernel}
For our purposes it is convenient to write the Ornstein-Ulenbeck semigroup as a semigroup of integral operators with respect to the invariant measure $\gamma_\infty$.
 We recall that the Gauss measure with mean zero and covariance matrix $Q_t$ on $\BR^d$ is the measure 
 $$
\wrt\gamma_t(x)=(2\pi)^{-d/2} (\det Q_t)^{-1/2}\,\e^{-\frac{1}{2}\langle Q_t^{-1} x,x\rangle}\wrt\lambda(x) \qquad \forall t\in(0,\infty],
$$
where $\lambda$ denotes the Lebesgue measure. In the following, with a slight abuse of notation, we shall denote by the same symbol $\gamma_t$ also the density of the measure with respect to $\lambda$.
 A simple change of variables in (\ref{Ht}) yields
$$
\cH^{Q,B}_t f(x)=\int h_t(x,y) \,f(y)\wrt\gamma_\infty(y),
$$
where
\begin{equation}\label{kernel}
h_t(x,y)=\det(Q_\infty\,Q_t^{-1})^{1/2}\ \e^{ -\frac{1}{2}\left[\langle Q_t^{-1}(\e^{tB}x-y),(\e^{tB}x-y)\rangle-\langle Q_\infty^{-1}y,y\rangle\right]}
\end{equation}\par
The main result of this section is  that, after an orthogonal change of coordinates, the kernel of the semigroup generated by an operator of the form 
$$
\cL(\alpha,R)=\frac{1}{2}\Delta-\frac{1}{2\alpha}\langle x, \nabla\rangle
+\frac{1}{2\alpha}\langle Rx, \nabla\rangle,
$$
with $\alpha>0$  and $R+R^*=0$, can be written as the product of the kernel of the semigroup generated by its symmetric part $\cL(\alpha,0)$  and some two-dimensional kernels (see Theorem \ref{reduction} and formula (\ref{hprod})).
To simplify notation, for the rest of this section we write $\cL=\cL(\alpha,R)$ and $\cL^0=\cL(\alpha,0)$. Thus
$$
\cL^0=\frac{1}{2}\Delta-\frac{1}{2\alpha}\langle x, \nabla\rangle,\qquad \cL=\cL^0+\frac{1}{2\alpha}\langle Rx, \nabla\rangle
$$
Henceforth we shall denote by $(\e^{t\cL^0})_{t\ge 0}$ and by $(\e^{t\cL})_{t\ge 0}$ the semigroups generated by $\cL^0$ and by $\cL$, respectively, and by $h^0_t(x,y)$ and $h_t(x,y)$ their kernels with respect to the invariant measure
$$
\wrt\gamma_\infty(x)=(2\pi\alpha)^{-d/2} \ \e^{-\frac{\mod{x}^2}{2\alpha}}.
$$
By the results of the previous section, the operator $\cL^0$ is symmetric and $\cL$ is normal. \par 
To avoid having  many $\alpha$'s floating around and to be consistent with the notation in \cite{GMMST}, we fix $\alpha=1/2$. The formulas for arbitrary $\alpha>0$  can be obtained from this special case by replacing $t$ by $t/2\alpha$ and $(x,y)$ by $(x/\sqrt{2\alpha},y/\sqrt{2\alpha})$ in  formulas (\ref{kernelsimm}) and (\ref{kernelnor}) below. \par
The kernel of the semigroup $(\e^{t\cL^0})_{t\ge 0}$ is 
\begin{equation}\label{kernelsimm}
h_t^{0}(x,y)= (1-\e^{-2t})^{-d/2}\exp\left\{\frac{1}{2}\left[\frac{|x+y|^{2}}{\e^{t}+1}-\frac{|x-y|^{2}}{\e^{t}-1}\right]\right\}.
\end{equation}
The operator $\cR=\langle Rx, \nabla\rangle$ generates the semigroup of isometries $\e^{t\cR}f(x)=f(\e^{tR}x)$ of $L^p(\gamma_\infty)$, $1\le p\le \infty$. Since $\e^{t\cR}$  commutes with $\e^{t\cL_0}$ for every $t\ge 0$, the kernel of $(\e^{t\cL})_{t\ge 0}$ is 
\begin{equation}\label{kernelnor}
h_t(x,y)= h_t^{0}(\e^{tR}x,y).
\end{equation}
\par
We shall exploit the facts that the matrix $R$ is skew-adjoint and that  the symmetric semigroup $\sgr{\cL}$ commutes with orthogonal transformations to prove that, after an orthogonal change of coordinates, the operator $\cL$ and the kernel $h_t(x,y)$ can be written in a more convenient form. \par
First we consider a special  two-dimensional case. 
For every real number $\theta$ we denote by ${ R}(\theta)$ the $2\times2$ matrix
\begin{equation}\label{Rtheta}
{R}(\theta)=\left(\begin{array}{cc}\phantom{-}\!0 & \theta  \\-\theta & 0\end{array}\right).
\end{equation}

Let $x\land y$ denote the skew-symmetric bilinear form on $\BR^2$ defined by
$$
x\land y=x_1y_2-x_2y_1.
$$
Then
\begin{equation}\label{boh?}
\mod{\e^{tR(\theta)}x\pm y}^2=\mod{x}^2+\mod{y}^2+2\cos(t\theta)\langle x,y \rangle\pm \sin(t\theta) x\land y \qquad\forall x,y\in\BR^2.
\end{equation}
Now, consider the Ornstein-Uhlenbeck operator $\cL\big(\frac{1}{2},R(\theta)\big)$ on $\BR^2$. To simplify notation henceforth we write $\cL_\theta=\cL\big(\frac{1}{2},R(\theta)\big)$. Thus 
$$
\cL_\theta=\frac{1}{2}\Delta-\langle x,\nabla\rangle+\langle R(\theta)x,\nabla\rangle,
$$
is the operator with covariance matrix $Q=I$ and drift $B=-I+R(\theta)$.
By using  (\ref{kernelsimm}), (\ref{kernelnor}) and (\ref{boh?}) it is straigthforward to see that the kernel of the semigroup generated by $\cL_\theta$ is
\begin{equation}\label{factor}
h_{t}^\theta(x,y)=h_{t}^{0}(x,y)\  k_{t\theta}(x,y),
\end{equation}
where $h^0_t(x,y)$
is as in (\ref{kernelsimm}) with $d=2$  and
\begin{equation}\label{K_t}
k_{t\theta}(x,y)=\exp\left\{-\frac{\e^{-t}}{1-\e^{-2t}}\big[\big(1-\cos 
(t\theta)\big)\langle x, y\rangle+\sin (t\theta)\,x\land y\big]\right\}\,.
\end{equation}
\par
Next we consider the case when the matrix $R$ is a $d\times d$ matrix in block diagonal form, with $2\times 2$ blocks of  the form (\ref{Rtheta}).
Let $n=\pint{d/2}$ be the greatest integer less than or equal to $d/2$. If  $\Theta=(\theta_1,\ldots,\theta_n)$ is in $\BR^n$ we denote by $R(\Theta)$ the $d\times d$ block-diagonal matrix 
$$
\left(\begin{array}{ccccc}
R(\theta_{1}) & \, & \, & \, & \,\\
\, & \cdot & \, & \, & \,\\
\, & \, & \cdot & \, & \, \\
\, & \, & \, & \cdot & \, \\
\, & \, & \, & \,& R(\theta_{n})
\end{array}\right)\,\textrm{ or }\,
\left(\begin{array}{cccccc}
R(\theta_{1}) & \, & \, & \, & \,\\
\, & \cdot & \, & \, & \, & \,\\
\, & \, & \cdot & \, & \, & \,\\
\, & \, & \, & \cdot & \, & \,\\
\, & \, & \, & \,& R(\theta_{n}) & \, \\
\, & \, & \, & \, & \, & \, 0
\end{array}\right)
$$
according to  whether $d$ is even or  odd, respectively.  \par
Assume first that $d$ is even. Given a vector $x$ in $\BR^d\simeq(\BR^{2})^n$ we write $x=(\vett{\xi}{n})$, where $\xi_k=(x_{2k-1},x_{2k})\in\BR^2$ for $k=1,\ldots,n$. Let $\cL_\Theta=\cL\big(\frac{1}{2},R(\Theta)\big)$ be the Ornstein-Uhlenbeck operator on $\BR^d$ of the form
\begin{equation}\label{LT}
\cL_\Theta=\frac{1}{2}\Delta-\langle x,\nabla\rangle+\langle R(\Theta)x,\nabla\rangle.
\end{equation}
{\sloppy
Then  $\cL_\Theta=\cL_{\theta_1}+\ldots+\cL_{\theta_n}$   where each $\cL_{\theta_k}$ for $k=1,\ldots, n$ is a two-dimensional Ornstein-Uhlenbeck operator acting in the variables  $\xi_k=(x_{2k-1},x_{2k})$ of the form
$$
\cL_{\theta_k}=\frac{1}{2}\Delta_k-\langle \xi_k,\nabla_k\rangle+\langle R(\theta_k)\xi_k,\nabla_k\rangle.
$$
}
Here $\Delta_k$ and $\nabla_k$ denote the two-dimensional Laplacian and gradient in the variables $(x_{2k-1},x_{2k})$.\par
Thus the operators  $\cL_{\theta_k}$,  $k=1,\ldots, n$ commute as do the semigroups 
generated by them. This implies that the kernel $h^{\Theta}_t(x,y)$ of the semigroup $(\e^{t\cL_{\Theta}})_{t\ge0}$ is 
the product of the  kernels of the semigroups $(\e^{t\cL_{\theta_k}})_{t\ge 0}$,  $k=1,\ldots, n$; i.e.
$$
h_t^{\Theta}(x,y)= \prod_{k=1}^n h_t^{\theta_k}(\xi_k,\eta_k)
$$
with $\xi_k=(x_{2k-1},x_{2k})$ and $\eta_k=(y_{2k-1},y_{2k})$ in $\BR^2$, where $h_t^{\theta_k}(\xi_k,\eta_k)$ are as in (\ref{factor}).
\par
If $d$ is odd then $\cL_\Theta=\cL_{\theta_1}+\ldots+\cL_{\theta_n}+\cL_{n+1}$  where  $\cL_{\theta_k}$, $k=1,\ldots,n$, are as before and $\cL_{n+1}$ is  the one-dimensional symmetric  Ornstein-Uhlenbeck operator $\frac{1}{2}\partial_{x_{n+1}}^2-x_{n+1}\partial_{x_{n+1}}$ acting in the variable $x_{n+1}$. Thus the kernel $h_t(x,y)$ has an additional factor 
$
h^{0}_t(x_{n+1},y_{n+1})
$, which is the kernel of a one-dimensional symmetric Ornstein-Uhlenbeck semigroup.\par
In any case, regardless of the parity of $d$, by (\ref{factor}) we may write the kernel of $\e^{t\cL_\Theta}$ in the following way
\begin{align}
h^\Theta_t(x,y)&=h^{0}_t(x,y)\prod_{j=1}^n k_{t\theta_j}(\xi_j,\eta_j)\nonumber\\
&=h^{0}_t(x,y)\prod_{\theta_j\not=0} k_{t\theta_j}(\xi_j,\eta_j)\label{hprod},
\end{align}
where $h^{0}_t(x,y)$ is the kernel of the $d$-dimensional symmetric semigroup generated by $\frac{1}{2}\Delta-\langle x,\nabla\rangle$ and each $k_{t\theta_j}$ is a two-dimensional kernel as in  (\ref{K_t}). 
\par
Finally, we show that  the analysis of any  operator $\cL=\frac{1}{2}\Delta-\langle x,\nabla\rangle +\langle Rx,\nabla\rangle$, where $R$ is a skew adjoint matrix, may be reduced to that of an  operator of the form $\cL_\Theta$. As in  Section \ref{s:prelim}, given an invertible real $d\times d$-matrix $M$, we denote by $\Phi_M:C(\BR^{d})\to C(\BR^{d})$ the transformation defined by $\Phi_{M}u(y)=u(M^{-1}y)$.

\begin{theorem}\label{reduction} Let $n=\pint{d/2}$ be the greatest integer less than or equal to $d/2$ and let $\cL$ be the operator $\frac{1}{2}\Delta- \langle x,\nabla\rangle +\langle Rx,\nabla\rangle$, where 
 $R$ is a $d\times d$ real, skew-adjoint matrix. Then there exists a $d\times d$ orthogonal matrix $g$ and a vector $\Theta=(\vett{\theta}{n})$ with $\theta_j\geq 0$ such that $\Phi_g \cL\Phi_g^{-1}=\cL_\Theta$. Moreover the kernels $h_t(x,y)$ and $h_t^\Theta(x,y)$ of the semigroups generated by $\cL$ and $\cL_\Theta$, respectively, satisfy the identity
 $$
h_t(x,y)=h_t^\Theta(gx,gy) \qquad\forall x,y\in \BR^d, \ t>0.
$$
\end{theorem}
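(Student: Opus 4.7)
The strategy has two independent pieces: an algebraic normal-form statement for the skew-adjoint matrix $R$, and a straightforward transfer from operators to kernels via the rotation invariance of $\gamma_\infty$.

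\emph{Step 1: existence of $g$.} Since $R$ is real and skew-adjoint ($R^* = -R$), the real spectral theorem for skew-symmetric matrices gives an orthogonal matrix $g_0$ such that $g_0 R g_0^{-1}$ is block-diagonal with $2\times 2$ blocks of the shape appearing in \eqref{Rtheta} (and, if $d$ is odd, a trailing scalar block which must be $0$ since the eigenvalues of $R$ are purely imaginary and come in conjugate pairs together with $0$). By conjugating each $2\times 2$ block by $\diag(1,-1)$ if necessary (still an orthogonal operation), I can arrange that the parameters $\theta_j$ are all nonnegative. This produces $g$ orthogonal and $\Theta=(\theta_1,\ldots,\theta_n)$ with $\theta_j \geq 0$ such that $gRg^{-1} = R(\Theta)$.

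\emph{Step 2: operator conjugation.} Let $G = \Phi_g^{-1} f$, so $G(x) = f(gx)$. A direct chain-rule computation, using $g^{-1} = g^T$, gives $\nabla G(x) = g^T (\nabla f)(gx)$ and $\Delta G(x) = (\Delta f)(gx)$. Consequently,
\begin{align*}
\langle x, \nabla G(x)\rangle &= \langle gx, (\nabla f)(gx)\rangle,\\
\langle Rx, \nabla G(x)\rangle &= \langle gRx, (\nabla f)(gx)\rangle = \langle gRg^{-1}\cdot gx, (\nabla f)(gx)\rangle.
\end{align*}
Applying $\Phi_g$ then replaces $gx$ by $x$, so
$$
\Phi_g \cL \Phi_g^{-1} = \tfrac12 \Delta - \langle x, \nabla\rangle + \langle gRg^{-1}x, \nabla\rangle = \cL_\Theta
$$
by Step 1 and the definition \eqref{LT} of $\cL_\Theta$.

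\emph{Step 3: kernel identity.} From Step 2, $\Phi_g \e^{t\cL}\Phi_g^{-1} = \e^{t\cL_\Theta}$ for every $t\ge 0$. Writing both semigroups as integral operators against $\gamma_\infty$ gives
$$
\e^{t\cL_\Theta} f(x) = (\e^{t\cL}(\Phi_g^{-1}f))(g^{-1}x) = \int h_t(g^{-1}x,y)\,f(gy)\wrt\gamma_\infty(y).
$$
Because $g$ is orthogonal, $\gamma_\infty$ is $g$-invariant, so the substitution $z = gy$ yields
$$
\e^{t\cL_\Theta} f(x) = \int h_t(g^{-1}x, g^{-1}z)\,f(z)\wrt\gamma_\infty(z).
$$
Comparing with the definition of $h_t^\Theta$ and renaming variables gives $h_t(x,y) = h_t^\Theta(gx,gy)$.

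\emph{Expected obstacles.} None of the steps presents a serious obstacle; the only non-routine input is the real normal form for skew-symmetric matrices in Step 1, together with the sign correction to obtain $\theta_j\ge 0$, both of which are classical. The rest is bookkeeping, relying crucially on the two symmetries already highlighted in the preceding discussion: invariance of the symmetric part $\cL^0$ (hence of $\tfrac12\Delta - \langle x,\nabla\rangle$) under orthogonal conjugation, and invariance of $\gamma_\infty$ under orthogonal transformations.
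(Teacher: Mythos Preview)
Your proof is correct and follows the same approach as the paper: reduce $R$ to block-diagonal form $R(\Theta)$ by an orthogonal conjugation, observe that the symmetric part $\tfrac12\Delta-\langle x,\nabla\rangle$ commutes with $\Phi_g$, and then read off the kernel identity from $\Phi_g\e^{t\cL}\Phi_g^{-1}=\e^{t\cL_\Theta}$ together with the $g$-invariance of $\gamma_\infty$. The only cosmetic difference is that the paper phrases Step~1 in Lie-algebra language (every element of $\mathfrak{so}(d)$ is conjugate to an element of the closed positive Weyl chamber of a maximal abelian subalgebra), whereas you invoke the real spectral theorem for skew-symmetric matrices and fix the signs by hand; these are the same fact.
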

\begin{proof} 
The set $\mathfrak{a}=\set{R(\Theta): \Theta\in\BR^n}$ is a maximal abelian subalgebra of the Lie algebra $\mathfrak{so}(d)$ of skew-symmetric $d\times d$ matrices. Since, by a well known result of Lie algebras (see \cite{C}), every element of $\mathfrak{so}(d)$ is conjugated to an element of $\mathfrak{a}^+=\set{R(\Theta): \Theta\in \overline{\BR_+}\,^n}$, given a skew-symmetric matrix $R$ there exists an orthogonal matrix $g$ and a vector $\Theta=(\theta_1,\ldots,\theta_n)$, with $\theta_j\geq 0$, such that $R=gR(\Theta)g^{-1}$. The identity $\Phi_g \cL\Phi_g^{-1}=\cL_\Theta$ follows, because the symmetric part $\frac{1}{2}\Delta-\langle x,\nabla\rangle$ of the operator $\cL$ commutes with $\Phi_g$.\par
This implies that $\Phi_g \e^{t\cL}\Phi_g^{-1}=\e^{t\cL_\Theta}$ for every $t\geq 0$. The identity between the kernels of the semigroups follows immediately from it.\par
\end{proof}

\section{Strong type estimates} \label{s: strong}
In this section we return to consider a Ornstein-Uhlenbeck semigroup $(\cH^{Q,B}_t)_{t\ge0}$ with arbitrary covariance $Q$ and drift $B$. We prove that the truncated Ornstein-Uhlenbeck maximal operator $\cH^{Q,B}_{*,{[0,T]}}$ is always unbounded on $L^1(\gamma_\infty)$ and when the semigroup is normal  the full maximal operator $\cH^{Q,B}_*$ is bounded on $L^p(\gamma_\infty)$, $1<p\le\infty$. 
\begin{theorem}\label{unbL1}
For all $T>0$ the operator $\cH^{Q,B}_{*,{[0,T]}}$ is unbounded on $L^1(\gamma_\infty)$.
\end{theorem}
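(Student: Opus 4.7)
My strategy is to test the hypothetical $L^{1}(\gamma_\infty)$-bound on a sequence of approximate identities concentrated at a single point, apply Fatou's lemma to force an envelope of the kernel $h_t$ to be $\gamma_\infty$-integrable, and then derive a contradiction from the heat-kernel-type singularity of $h_t$ on the diagonal as $t \to 0^+$.

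Fix any $y_0 \in \BR^d$ and set
$$
f_n = \gamma_\infty\bigl(B(y_0, 1/n)\bigr)^{-1}\, \chi_{B(y_0, 1/n)} \geq 0, \qquad \|f_n\|_{L^1(\gamma_\infty)} = 1.
$$
Since $y \mapsto h_t(x, y)$ is continuous for every fixed $x$ and $t > 0$, $\cH^{Q,B}_t f_n(x) \to h_t(x, y_0)$ as $n \to \infty$, whence $\liminf_n \cH^{Q,B}_{*,[0,T]} f_n(x) \geq h_t(x, y_0)$ for every $t \in (0, T]$. Taking the supremum over a countable dense subset of $(0, T]$ and using continuity of $t \mapsto h_t(x, y_0)$ yields
$$
\liminf_{n \to \infty} \cH^{Q,B}_{*,[0,T]} f_n(x) \geq \sup_{t \in (0, T]} h_t(x, y_0).
$$
If $\cH^{Q,B}_{*,[0,T]}$ were bounded on $L^1(\gamma_\infty)$ with operator norm $C$, Fatou's lemma would then force
$$
\int_{\BR^d} \sup_{t \in (0, T]} h_t(x, y_0)\, d\gamma_\infty(x) \leq C < \infty.
$$

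It remains to show that this integral is in fact infinite, which is a purely local statement near $y_0$. Starting from (\ref{kernel}) and the small-$t$ expansions $Q_t = tQ + O(t^2)$ and $\e^{tB} x = x + O(t)$, one checks that there exist a neighbourhood $U$ of $y_0$, constants $c > 0$, $C_1 > 0$ and $t_0 \in (0, T]$ such that
$$
h_t(x, y_0) \,\geq\, c\, t^{-d/2}\, \exp\!\left(- \frac{C_1}{t}\, |x - y_0|^{2}\right) \qquad \forall x \in U,\ t \in (0, t_0].
$$
For $x \in U \setminus \{y_0\}$ with $|x-y_0|$ sufficiently small, the choice $t := C_1|x - y_0|^{2}/d$ lies in $(0, t_0]$ and produces $\sup_{t \in (0, T]} h_t(x, y_0) \gtrsim |x - y_0|^{-d}$. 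Since $|x - y_0|^{-d}$ is not locally integrable near $y_0$ and $\gamma_\infty$ has positive continuous density there, the integral diverges, contradicting the finite bound $C$. The only non-routine ingredient is the asymptotic lower bound on $h_t$ as $t \to 0^+$, but this is a short and direct expansion of the explicit formula (\ref{kernel}); no genuine obstacle arises.
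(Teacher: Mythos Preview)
Your argument is correct and follows essentially the same route as the paper's proof: test the hypothetical $L^1$-bound on an approximate identity at a point, pass to the limit to deduce that $\sup_{t\in(0,T]} h_t(\,\cdot\,,y_0)\in L^1(\gamma_\infty)$, and then contradict this via the small-$t$ heat-kernel lower bound yielding a non-integrable $|x-y_0|^{-d}$ singularity. The only cosmetic differences are that the paper centers at $y_0=0$ (which slightly simplifies the expansion since $\e^{tB}0=0$) and reaches the integrability conclusion by a direct estimate rather than your Fatou argument; your version is arguably cleaner on that point.
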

\begin{proof} Suppose, by contradiction, that $\cH^{Q,B}_{*,{[0,T]}}$ is bounded on $L^1(\gamma_\infty)$ for some $T>0$. Denote by $\gamma_\infty$ the density of the invariant measure with respect to the Lebesgue measure.
Let $(f_n)$ be a sequence of nonnegative functions of norm $1$ in $L^1(\gamma_\infty)$ which converges in sense of distributions to $\gamma_\infty(0)^{-1}\delta_0$. Then  there exists a constant $C$ such that $\norm{\cH^{Q,B}_{*,{[0,T]}} f_n}{1}\le C$ for every $n$. Moreover
\begin{align*}
\lim_{n\to\infty}\cH^{Q,B}_tf_n(x)=&\lim_{n\to\infty} \int h_t(x,y)\,f_n(y)\wrt\gamma_\infty(y)
=h_t(x,0)
\end{align*}
uniformly on compact subsets of $\BR^d$. Thus, for $n$ sufficiently large,
$$
 \cH^{Q,B}_{*,{[0,T]}} f_n(x)\ge \cH^{Q,B}_tf_n(x)\ge h_t(x,0)-1 \qquad\forall x\in B(0,1)\quad \forall  t\in[0,T]. 
$$
Hence 
\begin{equation}\label{inth_t}
\int_{\mod{x}\le 1} \sup_{t\in[0,T]}h_t(x,0)\wrt\gamma_\infty(x)\le C.
\end{equation}
Now recall  the expression of the kernel $h_t(x,y)$  given in (\ref{kernel}).
Since $Q_t\sim t\,Q$ for $t\to0^+$, if $t\in(0,\epsilon)$ for some $\epsilon>0$ sufficiently small then there exist positive constants $c_0, c_1$ and $c_2$ such that
\begin{align*}
h_t(x,0)&=\left( \frac{\det Q_\infty}{\det Q_t}\right)^{1/2} \exp\left\{-\frac{1}{4}\langle Q_t^{-1} \e^{tB}x,\e^{tB}x\rangle\right\} \\ 
&\ge c_0\  t^{-d/2} \exp\left\{-c_1\frac{\mod{\e^{tB}x}^2}{t}\right\}  \\ 
&\ge c_0 \  t^{-d/2} \exp\left\{-c_2\frac{\mod{x}^2}{t}\right\}. \end{align*}
Thus if $\mod{x}\le1$
$$
\sup_{0<t<\epsilon} h_t(x,0)\ge\ c_0\ \sup_{0<t<\epsilon} t^{-d/2} \e^{-c_2\frac{\mod{x}^2}{t}}\ge c_\epsilon \mod{x}^{-d},
$$
which contradicts (\ref{inth_t}).
\end{proof}
The positive result for $L^p(\gamma_\infty)$, $1<p\le \infty$, for normal Ornstein-Uhlenbeck semigroups follows from a more general result for normal semigroups  of contractions  
on all $L^p$-spaces, whose generator on $L^2$ is sectorial. Indeed we  
have the following theorem.
\begin{theorem}\label{mgr}
Let $(X,\mu)$ be a $\sigma$-finite measure space.
Let $(T_t)_{t\geq 0}$ be a semigroup of contractions on $L^p(\mu)$ for every  
$p$ in $[1,\infty]$, which is strongly continuous for $p<\infty$.  
Suppose that each $T_t$ is normal on $L^2(\mu)$ and that  the spectrum  
of  the generator $\cG$ on $L^2(\mu)$ is contained in the sector  
$-\overline{S}_\theta$ for some $\theta\in[0,\pi/2)$. Then the maximal  
operator
$$
T_*f(x)=\sup_{t>0}\mod{T_tf(x)}.
$$
  is bounded on $L^p(\mu)$ for $1<p\leq\infty$.
\end{theorem}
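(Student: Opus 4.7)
The plan is to adapt Stein's argument for symmetric diffusion semigroups \cite{S}, replacing the spectral theorem for self-adjoint operators with its counterpart for normal operators, and letting the sectoriality hypothesis compensate for the loss of symmetry.

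First I would start from the elementary identity $\frac{d}{ds}(s\,T_s f)=T_s f + s\,\cG T_s f$ and, integrating over $[0,t]$ and dividing by $t$, derive
\[
T_t f = \frac{1}{t}\int_0^t T_s f\wrt s + \int_0^t \frac{s}{t}\,\cG T_s f\wrt s.
\]
I would dominate the first term pointwise, uniformly in $t$, by the ergodic maximal function $Mf(x)=\sup_{t>0}\tfrac{1}{t}\int_0^t\mod{T_sf(x)}\wrt s$, which by the Hopf--Dunford--Schwartz maximal ergodic theorem is bounded on $L^p(\mu)$ for every $p\in(1,\infty]$. For the second term, Cauchy--Schwarz gives
\[
\Bigmod{\int_0^t \frac{s}{t}\,\cG T_s f(x)\wrt s}\le \frac{1}{\sqrt{2}}\,g(f)(x),\quad g(f)(x):=\Bigl(\int_0^\infty s\,\mod{\cG T_s f(x)}^2\wrt s\Bigr)^{1/2}.
\]
Hence $T_* f\le Mf + \frac{1}{\sqrt 2}\,g(f)$, and the problem reduces to proving $\norm{g(f)}{p}\le C_p\norm{f}{p}$ for $1<p<\infty$ (the case $p=\infty$ being immediate from contractivity).

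Next I would establish the $L^2$-estimate for $g$ using the spectral theorem for the normal operator $\cG$: writing $\cG=\int\lambda\wrt E(\lambda)$ with spectral resolution $E$ supported in $-\overline{S}_\theta$, a direct application of Plancherel and Fubini yields
\[
\norm{g(f)}{2}^2=\int_0^\infty s\,\norm{\cG T_s f}{2}^2\wrt s=\int\frac{\mod{\lambda}^2}{4\,(\Re\lambda)^2}\wrt\mu_f(\lambda)\le \frac{\norm{f}{2}^2}{4\cos^2\theta},
\]
where $\mu_f(A):=\norm{E(A)f}{2}^2$ and the final inequality uses $\mod{\Re\lambda}\ge\cos\theta\,\mod{\lambda}$ on $-\overline{S}_\theta$. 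This step is where the assumption $\theta<\pi/2$ enters in an essential way.

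The main obstacle is the passage from the $L^2$-bound of $g$ to $L^p$-bounds for $p\ne 2$, since Stein's symmetric argument exploits self-adjointness in an essential way. To handle this I would exploit analyticity: sectoriality of angle $\theta<\pi/2$ permits the extension of $(T_t)_{t\ge 0}$ to a bounded holomorphic semigroup $(T_z)_{z\in S_{\pi/2-\theta}}$ on $L^2(\mu)$, and combined with the $L^p$-contractivity on the real axis this allows an application of Stein's complex interpolation theorem to an appropriate analytic family of Littlewood--Paley-type operators built from $(T_z)$, yielding $\norm{g(f)}{p}\le C_p\norm{f}{p}$ for $p\in[2,\infty)$. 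The range $p\in(1,2)$ is then recovered by a duality argument using the adjoint semigroup $(T_t^*)_{t\ge 0}$, whose generator is normal and sectorial of the same angle on $L^2$ and which inherits $L^{p'}$-contractivity from the $L^p$-contractivity of $T_t$. Collecting these bounds with the pointwise estimate $T_*f\le Mf+\frac{1}{\sqrt 2}\,g(f)$ gives the asserted $L^p$-boundedness of $T_*$ for all $1<p\le\infty$.
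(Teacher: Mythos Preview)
Your overall framework---dominating $T_*f$ by the ergodic maximal function plus the Littlewood--Paley function $g(f)=g_1(f)$, and then bounding $g_1$ on $L^2$ via the spectral theorem for the normal generator together with the sectoriality hypothesis---is exactly the approach the paper takes, and your $L^2$ computation is correct.

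The gap is in your passage from $L^2$ to $L^p$. The paper's observation is sharper and simpler than what you propose: Stein's argument on pp.~76--81 of \cite{S} never uses self-adjointness beyond the $L^2$ boundedness of the functions $g_k$ for \emph{all} $k\ge1$. Once those $L^2$ bounds are in hand (and your spectral computation extends verbatim to every $k$, with constant $\Gamma(2k)/(2\cos\theta)^{2k}$), Stein's own analytic interpolation with the fractional-integral family $M^\alpha$ goes through unchanged and yields $\|g_1(f)\|_p\le C_p\|f\|_p$ for all $1<p<\infty$ at once---no separate treatment of $p<2$ and $p>2$ is needed. Your proposed alternative---extending the semigroup holomorphically in the time variable and interpolating an unspecified ``analytic family of Littlewood--Paley-type operators''---is not set up: you do not say what the family is or what the second endpoint of the interpolation is (the $g$-function is unbounded on $L^\infty$, so interpolating $L^2$ against $L^\infty$ cannot work). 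The duality step is also problematic as stated: $g$-functions are sublinear, not linear, so bounding the $g$-function of the adjoint semigroup on $L^p$ for $p\ge 2$ does not by itself produce a bound for $g$ on $L^{p'}$. The cleanest fix is to extend your $L^2$ estimate to all $g_k$ (the same spectral calculation) and then invoke Stein's pp.~76--81 without modification, exactly as the paper does.
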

\begin{proof}
By examining carefully Stein's proof of the maximal theorem for  
self-adjoint semigroups of contractions (see \cite[p. 73--81]{S})  
one realizes that self-adjointness plays a r\^ole only in the proof of  
the boundedness on $L^2(\mu)$ of the Littlewood-Paley functions
$$
g_k(f)(x)=\left(\int_0^\infty\mod{t^k D_t^k T_t f(x)}^2\frac{\wrt  
t}{t}\right)^{1/2}, \qquad\forall k=1,2,\ldots
$$
However, the same result can  also be obtained under the assumptions of  
the theorem. Indeed, let
$$
-\cG=\int_{\overline{S}_\theta} z\wrt\cP_z
$$
be the spectral resolution of $-\cG$.   By the spectral theorem for  
normal operators
$$
D_t^kT_tf=(-1)^k\int_{\overline{S}^+_\theta} z^k \e^{-tz} \wrt \cP_z f,
$$
where $\overline{S}^+_\theta=\overline{S}_\theta\setminus\set{0}$. 
Hence
$$
\norm{D^k_t T_t f}{2}\!\!^2=\int_{\overline{S}^+_\theta}\mod{z}^{2k}\  
\e^{-2t\Re z} \langle\wrt\cP_z f,f\rangle.
$$
Thus
\begin{align*}
\int_X \mod{g_k(f)(x)}^2\wrt\mu(x)&=\int_X\int_0^\infty\mod{t^k  
D_t^kT_t f(x)}^2 \frac{\wrt t}{t}\wrt\mu(x) \\
&=\int_0^\infty t^{2k} \int_X \mod{D_t^k T_t f(x)}^2  
\wrt\mu(x)\frac{\wrt t}{t} \\
&= \int_0^\infty\int_{\overline{S}^+_\theta}\mod{tz}^{2k}\ \e^{-2t\Re  
z}\langle\wrt\cP_z f, f\rangle\frac{\wrt t}{t}\\
&=\int_{\overline{S}^+_\theta} \int_0^\infty\mod{tz}^{2k}\ \e^{-2t\Re  
z}\frac{\wrt t}{t}\langle\wrt\cP_z f, f\rangle\\
&\leq  
\frac{\Gamma(2k)}{(2\cos\theta)^{2k}}\int_{\overline{S}^+_\theta}\langle\wrt\cP_z f, f\rangle\\
&\leq \frac{\Gamma(2k)}{(2\cos\theta)^{2k}}\norm{f}{2}\!\!^2,
\end{align*}
because $\mod{z}\leq (\cos\theta)^{-1}{\Re z}$ in $\overline{S}_\theta$.  
This proves that $f\mapsto g_k(f)$ is bounded on $L^2(\mu)$. The rest  
of the proof is just as in \cite[p. 76--81]{S}.\par
\end{proof}
{\sloppy
\begin{corollary}\label{NOUonL^p}
Let $(\cH^{Q,B}_t)_{t\ge 0}$ be a normal Ornstein-Uhlenbeck semigroup. 
 Then the maximal operator $\cH^{Q,B}_*$ is bounded on $L^p(\gamma_\infty)$ for every $p$ in $(1,\infty)$. 
\end{corollary}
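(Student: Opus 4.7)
The plan is to verify the hypotheses of Theorem \ref{mgr} for the normal Ornstein--Uhlenbeck semigroup $(\cH^{Q,B}_t)_{t\ge 0}$ and then invoke that theorem. Most of the hypotheses come for free: as recalled in the Introduction, $(\cH^{Q,B}_t)_{t\ge 0}$ is a semigroup of positive contractions on $L^p(\gamma_\infty)$ for every $p\ge 1$, and being a diffusion semigroup it is strongly continuous on $L^p(\gamma_\infty)$ for $p<\infty$. Normality on $L^2(\gamma_\infty)$ is the assumption. What remains to justify is that the spectrum of the generator $\cL_{Q,B}$ on $L^2(\gamma_\infty)$ is contained in $-\overline{S}_\theta$ for some $\theta\in[0,\pi/2)$.

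For this I would appeal to the reduction carried out in Section \ref{s:prelim}. Since $\Phi_M$ is an isometric isomorphism between $L^2(\gamma_\infty)$ and $L^2(\tilde\gamma_\infty)$ with $\cL_{Q,B}=\Phi_M^{-1}\cL_{I,\tilde B}\Phi_M$, the two operators are unitarily equivalent and hence have the same spectrum. By Proposition \ref{normal} the matrix $R$ appearing in $\tilde B=-\frac{1}{2}D_{1/\lambda}+R$ is skew-adjoint and commutes with $D_\lambda$, so the decomposition at the end of Section \ref{s:prelim} gives $\cL_{I,\tilde B}=\sum_{j=1}^\ell \cL(\alpha_j,R_j)$ as a sum of pairwise commuting operators acting in orthogonal subspaces $P_j\BR^d$.

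The spectrum of each building block $\cL(\alpha_j,R_j)$ can be computed explicitly. Using Theorem \ref{reduction} one may further orthogonally conjugate $R_j$ into block-diagonal form, after which $\cL(\alpha_j,R_j)$ becomes a sum of one- and two-dimensional commuting Ornstein--Uhlenbeck operators whose eigenvalues are nonnegative integer combinations of the eigenvalues of the associated drift matrix $\frac{1}{2\alpha_j}(R_j-I)$. All such drift eigenvalues have real part equal to $-\frac{1}{2\alpha_j}<0$ and purely imaginary part bounded by $\frac{1}{2\alpha_j}\max_k|\tau_k|$, where $i\tau_k$ ranges over the eigenvalues of $R_j$. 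Consequently every point $\mu$ in the spectrum of $\cL_{I,\tilde B}$ satisfies $\Re\mu<0$ and $|\Im\mu|\le M\,|\Re\mu|$ with a uniform constant $M$ depending only on $\{\alpha_j\}$ and on the eigenvalues of $R$, which places the spectrum inside $-\overline{S}_\theta$ for $\theta=\arctan M<\pi/2$.

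With this sectoriality verified, Theorem \ref{mgr} yields boundedness of $\cH^{Q,B}_*$ on $L^p(\gamma_\infty)$ for every $p\in(1,\infty]$, which contains the asserted range $(1,\infty)$. The only nontrivial step is the spectral computation for the blocks $\cL(\alpha_j,R_j)$; I expect this to be the main obstacle to a fully self-contained write-up, although it is by now classical Ornstein--Uhlenbeck material and, as indicated above, can be reduced via Theorem \ref{reduction} to the essentially two-dimensional case where the eigenvalues are trivially visible.
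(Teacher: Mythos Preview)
Your approach is correct and is essentially the same as the paper's: verify the hypotheses of Theorem~\ref{mgr}, with the only nontrivial one being that the $L^2$-generator is sectorial of angle less than $\pi/2$, and then invoke that theorem.

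The one difference is how the sectoriality is obtained. The paper simply cites the result of Metafune, Pallara and Priola~\cite{MPP}, where the $L^p$-spectrum of a general Ornstein--Uhlenbeck generator is computed and shown to lie in a sector of half-angle less than $\pi/2$. You instead sketch an explicit eigenvalue computation via the block decomposition of Section~\ref{s:prelim} and Theorem~\ref{reduction}. Your computation is correct (with the minor caveat that $0$ is always an eigenvalue, so one has $\Re\mu\le 0$ rather than $\Re\mu<0$; this does not affect the sectoriality conclusion since $0\in-\overline{S}_\theta$). However, to turn the eigenvalue list into a statement about the full spectrum you still need to know that the spectrum of $\cL_{I,\tilde B}$ on $L^2(\tilde\gamma_\infty)$ is exactly the set of eigenvalues (equivalently, that the generalized Hermite eigenfunctions form a complete system), which is precisely what~\cite{MPP} provides. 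So in the end your route and the paper's converge on the same external input; the paper's citation is just the shorter path.
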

}
\begin{proof}
By \cite{MPP} the spectrum of the generator of $(\cH^{Q,B}_t)_{t\ge 0}$ is contained in a sector of angle less than $\pi/2$. Hence the conclusion follows from Theorem \ref{mgr}.
\end{proof}
\section{The weak type estimate.}\label{s:weaktype} 
  In this section we shall prove the weak type $1$ estimate for the maximal operators associated to the normal Ornstein-Uhlenbeck semigroup $(\cH^{Q,B}_t)_{t\ge0}$ with covariance $Q=I$ and drift $B=\frac{1}{2\alpha}(R-I)$, where $\alpha>0$ and $R$ is a skew-symmetric real matrix, i.e. for the semigroup generated by the operator 
$$
\cL(\alpha,R)=\frac{1}{2}\Delta-\frac{1}{2\alpha}\langle x,\nabla\rangle+\frac{1}{2\alpha}\langle Rx,\nabla\rangle.
$$
 Namely, we shall prove the following theorem.
\begin{theorem}\label{wt1} 
For every $T>0$ the truncated maximal operator $$\cH_{*,{[0,T]}} f(x)=\sup_{t\in[0,T]}|\e^{t\cL(\alpha,R)} f(x)|$$ is of weak type $1$. 
If the one-parameter group $(\e^{tR})_{t\in\BR}$ is periodic then the full maximal operator 
$
\cH_* f(x)=\sup_{t\ge0}|\e^{t\cL(\alpha,R)} f(x)|
$
 is of weak type $1$.
\end{theorem}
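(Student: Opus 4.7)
I would first use Theorem \ref{reduction} together with the standard rescaling that reduces $\alpha$ to $1/2$ to assume $R=R(\Theta)$ with $\Theta=(\theta_1,\dots,\theta_n)$ and $\theta_j\ge 0$, so that by the factorization (\ref{hprod}) the semigroup kernel with respect to $\gamma_\infty$ is
$$
h^\Theta_t(x,y)=h^0_t(x,y)\,\prod_{j:\theta_j>0}k_{t\theta_j}(\xi_j,\eta_j).
$$
The strategy, as indicated in the introduction, is to bound the product $\prod_j k_{t\theta_j}$ pointwise by a Gaussian factor $e^{A(t)(|x|^2+|y|^2)}$ which can be absorbed into $h^0_t$ and into the invariant measure, and then to dominate $\cH_*$ (or its truncation) by a symmetric Ornstein--Uhlenbeck maximal operator. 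To the latter I would apply the weak type $1$ theorem of Sj\"ogren \cite{Sj}, in the sharper form exploited in \cite{GMMST}.

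The first step is the elementary control of the perturbation. Using the identities $1-\cos(t\theta_j)=2\sin^2(t\theta_j/2)$ and $|\sin(t\theta_j)|\le 2|\sin(t\theta_j/2)|$, together with $|\sin u|\le|u|$ and $\frac{e^{-t}}{1-e^{-2t}}=\frac{1}{2\sinh t}\le\frac{1}{2t}$ for $t>0$, one obtains
$$
\Bigl|\prod_{j:\theta_j>0}k_{t\theta_j}(\xi_j,\eta_j)\Bigr|\le\exp\bigl\{A(t)(|x|^2+|y|^2)\bigr\},
$$
where $A(t)$ is bounded on every compact subset of $[0,\infty)$. In the periodic situation, where $\tau\theta_j\in 2\pi\BZ$ for some $\tau>0$, the trigonometric factors are $\tau$-periodic in $t$ while the prefactor $1/(2\sinh t)$ decays like $e^{-t}$ at infinity; hence $A(t)$ is bounded on the whole half-line and $A(t)\to 0$ as $t\to\infty$.

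For the truncated operator $\cH_{*,[0,T]}$, I would pick $T_0\in(0,T]$ such that $A(t)\le\epsilon$ on $[0,T_0]$ for a sufficiently small $\epsilon$ and establish the pointwise comparison
$$
h^0_t(x,y)\,e^{\epsilon(|x|^2+|y|^2)}\le C\,h^{0,\beta}_{t'(t)}(x,ay)\,\rho(y),
$$
in which $h^{0,\beta}$ is the kernel of a slightly modified symmetric Ornstein--Uhlenbeck semigroup, $a\in(0,1)$ a fixed scaling, $\rho(y)$ a bounded Radon--Nikodym weight comparing $\gamma_\infty$ to the invariant measure of the modified semigroup, and $t\mapsto t'(t)$ a monotone time reparametrization. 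Taking the supremum over $t\in(0,T_0]$ and combining with the perturbation bound above dominates $\cH_{*,[0,T_0]}|f|$ by a constant multiple of the symmetric OU maximal operator applied to $|f|$, which is of weak type $1$ by \cite{GMMST}. On the remaining interval $[T_0,T]$ the kernel $\sup_{t\in[T_0,T]}|h^\Theta_t(x,y)|$ is bounded by $C\,e^{c(|x|^2+|y|^2)}$ with $c<1$, giving even $L^1(\gamma_\infty)\to L^\infty(\gamma_\infty)$ boundedness of that piece, hence weak type $1$ of $\cH_{*,[0,T]}$.

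For the full operator in the periodic case I would split $[0,\infty)=[0,T_0]\cup[T_0,\infty)$: the first piece is handled exactly as in the truncated argument, while on $[T_0,\infty)$ the exponential decay of $A(t)$ combined with the fact that $h^0_t(x,y)\to 1$ yields $\sup_{t\ge T_0}|h^\Theta_t(x,y)|\le C\,e^{\epsilon(|x|^2+|y|^2)}$ with $\epsilon$ arbitrarily small, producing again an $L^1\to L^\infty$ estimate for that piece. The decisive difficulty is the pointwise comparison displayed above: diagonalizing the positive-definite quadratic form appearing in the exponent of $h^0_t$ after the Gaussian perturbation $e^{\epsilon(|x|^2+|y|^2)}$ has been added, and choosing the modified parameter $\beta$, the scaling $a$ and the time change $t'(\cdot)$ coherently so that the dominating semigroup is a genuine symmetric Ornstein--Uhlenbeck semigroup covered by \cite{GMMST} and so that the supremum in $t$ on the left translates into the maximal operator in $t'$ on the right.
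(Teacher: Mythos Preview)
Your plan breaks down at two places, and both are fatal.

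\textbf{The perturbation is not small for small $t$.} You assert that one can choose $T_0$ so that $A(t)\le\epsilon$ on $[0,T_0]$. But your own estimate gives, after using $1/(2\sinh t)\le 1/(2t)$ and $|\sin u|\le|u|$,
\[
\Bigl|\log k_{t\theta_j}(\xi_j,\eta_j)\Bigr|
\le \frac{t\theta_j^2}{4}\,|\langle\xi_j,\eta_j\rangle|+\frac{\theta_j}{2}\,|\xi_j\wedge\eta_j|,
\]
so that $A(t)\to\tfrac14\max_j\theta_j>0$ as $t\to0^+$, not to $0$. Indeed $k_{t\theta}(\xi,\eta)\to \exp\{-\tfrac{\theta}{2}\,\xi\wedge\eta\}$ as $t\to0$, which is far from $1$. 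The whole comparison $h^0_t(x,y)\,\e^{\epsilon(|x|^2+|y|^2)}\le C\,h^{0,\beta}_{t'(t)}(x,ay)\rho(y)$ is therefore needed for a \emph{fixed} positive $\epsilon$, and then there is no such domination by a Mehler kernel: adding $\epsilon(|x|^2+|y|^2)$ to the exponent of $h^0_t$ destroys the Mehler structure, and no scalar rescaling $y\mapsto ay$ and time change $t\mapsto t'$ restores it.

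\textbf{The $L^1\to L^\infty$ claims are false.} On $[T_0,T]$ (and likewise on $[T_0,\infty)$) you say that $\sup_t|h^\Theta_t(x,y)|\le C\,\e^{c(|x|^2+|y|^2)}$ with $c<1$ ``gives $L^1(\gamma_\infty)\to L^\infty(\gamma_\infty)$''. It does not: $L^1(\gamma_\infty)\to L^\infty$ requires $\sup_{x,y}|h^\Theta_t(x,y)|<\infty$, whereas already the symmetric kernel satisfies $h^0_t(x,x)=(1-\e^{-2t})^{-d/2}\exp\{2|x|^2/(\e^t+1)\}\to\infty$ as $|x|\to\infty$. Nor does the bound yield weak type $1$: for $f\in L^1(\gamma_\infty)$ the quantity $\int \e^{c|y|^2}|f(y)|\,\wrt\gamma_\infty(y)$ need not even be finite.

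The paper proceeds very differently. It splits the kernel into a local part (supported on $\{|x-y|\le\min(1,|x+y|^{-1})\}$), which is dominated by a Euclidean heat maximal function, and a global part. For the latter it does \emph{not} try to make $k_{t\theta}$ close to $1$; instead it sacrifices most of the Gaussian decay of $h^0_t$, keeping only $\exp\{-\tfrac{1}{40s}\cQ_s(x,y)\}$, and proves via a five-region case analysis in $(x,y)$ (depending on the signs of $\langle x,y\rangle$ and $x\wedge y$, and on $|x-y|/|y|$ and the angle) that the remaining factor $\exp\{-\tfrac{9}{40s}\cQ_s(x,y)\}\prod_j k_{\tau(s)\theta_j}$ is uniformly bounded for $s$ small. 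In the periodic case an analogous (but different) three-region analysis handles $s\in\tau^{-1}(I_P^\sharp)$. The resulting bound $h_{\tau(s)}(x,y)\le C\,s^{-d/2}\e^{|x|^2-\frac{1}{40s}\cQ_s(x,y)}$ then feeds into the weak type machinery of \cite{GMMST}. The point you are missing is precisely that the perturbation $k_{t\theta}$ must be absorbed into the \emph{off-diagonal} Gaussian decay $\e^{-\frac{1}{4s}\cQ_s(x,y)}$, not into $\e^{\epsilon(|x|^2+|y|^2)}$, and this absorption is delicate because the two exponents are of comparable size when the angle between $x$ and $y$ is small.
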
 
As we have already remarked in Section \ref{s: kernel} we may assume that $2\alpha=1$, by a scaling argument.
\par
First we reduce the problem to proving that two smaller maximal operators are of weak type $1$. For every subset $A$ of $\BR_+$ denote by $\cH_{*,A}$ the maximal operator defined by
\begin{equation*}
\cH_{*,A}f(x)=\sup_{t\in A}|\e^{t\cL(1/2,R)} f(x)|,\quad f\in L^{1}(\ga).
\end{equation*}
If $I$ is a closed interval in $\BR_{+}$ and $P$ is a positive number, we denote by $I_P^{\sharp}$ the union of $ P\BN$-translates of $I$, i.e. $I^{\sharp}=\bigcup_{n\in\BN}(I+P n)$.
\begin{lemma}\label{RIDUXMAXOP}
Suppose that for some $t_{0}>0$   the maximal operator $\cH_{*,[0,t_{0}]}$  is of weak type $1$. Then the truncated maximal operator $\cH_{*,[0,T]}$ is of weak type $1$ for every $T>0$. If, furthermore, there exists  an interval $I$ in $\BR_{+}$ such that the operator $\cH_{*,I_P^\sharp}$ is of weak type $1$ then the full maximal operator  $\cH_*$ is of weak type~$1$.
\end{lemma}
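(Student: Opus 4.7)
The plan is to establish the two claims in sequence.

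For the first claim, I would fix $T>0$, choose $N\in\BN$ with $Nt_0 \ge T$, and cover $[0,T]$ by the windows $[kt_0,(k+1)t_0]$, $k = 0,\ldots,N-1$. The semigroup identity $\cH_{kt_0+s}f = \cH_s(\cH_{kt_0}f)$ for $s\in[0,t_0]$ rewrites
$$\cH_{*,[kt_0,(k+1)t_0]}f = \cH_{*,[0,t_0]}(\cH_{kt_0}f),$$
and since each $\cH_{kt_0}$ is an $L^1(\gamma_\infty)$-contraction, the weak-type hypothesis applied to $\cH_{kt_0}f$ gives a bound uniform in $k$. Union-bounding over the $N$ pieces at level $\alpha/N$ yields weak type $(1,1)$ for $\cH_{*,[0,T]}$ with constant depending on $T$.

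For the second claim, write $I = [a,b]$ and $\ell = b-a$; when $\ell \ge P$ the conclusion is immediate since $I_P^\sharp \supseteq [a,\infty)$, so the real work is the case $\ell < P$. The first claim supplies weak type $(1,1)$ for $\cH_{*,[0,T]}$ for every finite $T$. After reducing to $f\ge 0$ using positivity of the kernel, I would decompose
$$[0,\infty) = [0,b] \cup I_P^\sharp \cup \mathcal{G},\qquad \mathcal{G} = \bigcup_{n\ge 0}(b+Pn,\,a+P(n+1)),$$
so that $\cH_*f \le \cH_{*,[0,b]}f + \cH_{*,I_P^\sharp}f + \cH_{*,\mathcal{G}}f$. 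The first two are weak type $(1,1)$ by the first claim and the hypothesis. For $t$ in a gap, write $t = (b+Pn)+\tau$ with $\tau \in (0,P-\ell)$; the semigroup identity $\cH_tf = \cH_\tau(\cH_{b+Pn}f)$, the pointwise bound $\cH_{b+Pn}f \le M := \cH_{*,I_P^\sharp}f$, and monotonicity of $\cH_\tau$ combine to give $\cH_tf \le \cH_\tau M$. Supping over $n$ and $\tau$ yields
$$\cH_{*,\mathcal{G}}f \le \cH_{*,[0,P-\ell]}(\cH_{*,I_P^\sharp}f).$$

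The main obstacle will be showing that this final composition is of weak type $(1,1)$ in $\|f\|_1$, since composition of two weak-type-$(1,1)$ operators is not in general weak type $(1,1)$. To close the argument I plan to exploit that both factors are $L^\infty$-contractive, hence of strong type $(p,p)$ for every $p\in(1,\infty]$ by Marcinkiewicz interpolation. Together with a Calder\'on--Zygmund splitting of $f$ at level $\alpha$, which absorbs the bounded part into an $L^\infty$-bound on the composition, and a dyadic decomposition of the remaining unbounded part, the strong-type estimates for $p$ near $1$ can be assembled to produce the required weak-type-$(1,1)$ bound in terms of $\|f\|_1$. Combining this with the bounds on $\cH_{*,[0,b]}f$ and $\cH_{*,I_P^\sharp}f$ then yields $\cH_*$ of weak type $(1,1)$.
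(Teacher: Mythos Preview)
Your argument for the first claim is correct and is exactly the paper's approach: the translate principle
\[
\cH_{*,A+t_i}f=\cH_{*,A}(\cH_{t_i}f),\qquad \|\cH_{t_i}f\|_{L^1(\gamma_\infty)}\le\|f\|_{L^1(\gamma_\infty)},
\]
immediately propagates the weak type $1$ bound from $A$ to any finite union of translates of $A$.

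For the second claim, however, you take a detour that creates a real obstacle. By bounding $\cH_{*,\mathcal G}f\le \cH_{*,[0,P-\ell]}\bigl(\cH_{*,I_P^\sharp}f\bigr)$ you are forced to control a \emph{composition} of two sublinear weak type $1$ operators, and as you yourself note, such a composition need not be of weak type $1$. Your proposed rescue via Calder\'on--Zygmund decomposition and interpolation does not close: both factors are bounded on $L^\infty$ and of weak type $(1,1)$, but the Hardy--Littlewood maximal function $M$ enjoys the same properties and $M\circ M$ is \emph{not} of weak type $(1,1)$ (for $f=\chi_{[0,1]}$ on $\BR$ one has $M(Mf)(x)\sim |x|^{-1}\log|x|$ at infinity, so the distribution function at level $\alpha$ behaves like $\alpha^{-1}\log(1/\alpha)$). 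No amount of splitting $f$ and using strong $(p,p)$ bounds for $p>1$ can overcome this in general, because the inner maximal function already lives only in $L^{1,\infty}$.

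The paper avoids this difficulty entirely by observing that the gaps can be covered by \emph{finitely many translates of $I_P^\sharp$ itself}. Indeed, if $I=[a,b]$ has length $\ell$, then the sets $I_P^\sharp+j\ell$, $j=0,1,\ldots,\lceil (P-\ell)/\ell\rceil$, together cover $[a,\infty)$, and $[0,a]$ is handled by the first claim. Thus $\BR_+$ is a finite union of translates of $[0,T]$ and of $I_P^\sharp$, and the very translate principle you proved for the first claim finishes the argument in one line---no composition of maximal operators is needed.
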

\begin{proof}
First we show that if $A$ is a subset of $\BR_{+}$ such that the operator $\cH_{*,A}$ is of weak type $1$ and $B=\bigcup_{i=1}^{N}(A+t_{i})$ is a finite union of translates of $A$ then $\cH_{*,B}$ is of weak type $1$. Indeed
\begin{align*}
\cH_{*,B}f(x)=\sup_{t\in B}|\e^{t\cL(1/2,R)} f(x)|&=\max_{i=1,\ldots,N}\sup_{t\in A}|\e^{(t+t_{i})\cL(1/2,R)}f(x)|\\
&=\max_{i=1,\ldots,N}\sup_{t\in A}|\e^{t\cL(1/2,R)} \e^{t_{i}\cL(1/2,R)}f(x)|\\
&=\max_{i=1,\ldots,N}\cH_{*,A}\ \e^{t_{i}\cL(1/2,R)}f(x).
\end{align*}
Hence, for $\lambda>0$ fixed,
\begin{align*}
\ga(\{x\in\BR^{d}:\cH_{*,B}f(x)>\lambda\})&\leq\sum_{i=1}^{N}\ga(\{x\in\BR^{d}:\cH_{*,A}\ \e^{t_{i}\cL(1/2,R)}f(x)>\lambda\})\\
&\leq\frac{C}{\lambda}\sum_{i=1}^{N}\|\e^{t_{i}\cL(1/2,R)} f\|_{L^{1}(\ga)}\\
&\leq\frac{C\,N}{\lambda}\|f\|_{L^{1}(\ga)},
\end{align*}
because $\e^{t_{i }\cL(1/2,R)}$ is a contraction on $L^{1}(\ga)$ for every $i=1,\ldots,N$.\par
The conclusion follows  because the set $[0,T]$ is a finite union of translates of $(0,t_{0})$ and $\BR_+$ is a finite union of translates of $[0,T]$ and $I_P^{\sharp}$. 
\end{proof}
\par
Thus we only need to prove the weak type $1$ estimate for the operator $\cH_{*,A}$ when $A=(0,t_0)$ and $A={I_P^{\sharp}}$ for some $t_0>0$ and some closed interval $I$ in $\BR_+$. As in the analysis of the maximal operator for the symmetric Ornstein-Uhlenbeck semigroup $(\e^{t\cL(1/2,0)})_{t\geq0}$ (see \cite{GMMST}), we shall decompose each of these two maximal operators in a ``local'' part, given by a kernel living close to the diagonal, and the remaining or ``global'' part. To this end consider the set
$$
L=\set{(x,y)\in\BR^d\times\BR^d: \mod{x-y}\leq\min(1,\mod{x+y}^{-1})}
$$ 
and  denote by $G$ its complement. We shall call $L$  and $G$ the `local' and the `global' region, respectively.  The local
and the global parts of the operator $\cH_{*,A}$ are
defined by
\begin{align}
\cH_{*,A}^{\loc} f(x)=&\sup_{t\in A}\left\vert\int
h_t(x,y) \1_{L}(x,y) f(y)
\wrt \gamma(y)\right\vert    \nonumber
\\
\cH_{*,A}^{\glob}
f(x)=&\sup_{t\in A}\left\vert\int
h_t(x,y) \1_{G}(x,y) f(y)
\wrt \gamma(y)\right\vert ,  \label{HstarG}
\end{align}
where $\1_{L}$ and $\1_{G}$ are the characteristic functions of the sets
$L$ and $G$ respectively. Clearly
$$
\cH_{*,A} f(x)\le \cH_{*,A}^{\loc} f(x)+\cH_{*,A}^{\glob} f(x).
$$
We shall prove separately the weak type $1$ estimate for $\cH_{*,A}^{\loc}$ and $\cH_{*,A}^{\glob}$. \par
First we deal with the local part. 
We shall actually prove that for all Ornstein-Uhlenbeck semigroups $(\cH_t)_{t\ge0}$, without restrictions on covariance and drift, the local maximal operator $\cH^{\loc}_*=\cH_{*,\BR_+}^{\loc}$ is of weak type $1$.
\begin{lemma}\label{l:locest}
Let $(\cH_t)_{t\ge 0}$ be a Ornstein-Uhlenbeck semigroup with arbitrary covariance and drift. Then there exist positive constants $c$ and $C$ such that for all $(x,y)$ in the local region $L$
\begin{equation}\label{locest}
h_t(x,y)\le C\,(1-\e^{-t})^{-d/2} \ \gamma_\infty(y)^{-1}\ \exp\left(-c\frac{\mod{x-y}^2}{1-\e^{-t}}\right) \qquad\forall t>0.
\end{equation}
\end{lemma}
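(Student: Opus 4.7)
The plan is to recast the claim as a bound on the Lebesgue kernel $p_t(x,y):=h_t(x,y)\gamma_\infty(y)$, which by~(\ref{kernel}) equals
$$
p_t(x,y)=(2\pi)^{-d/2}(\det Q_t)^{-1/2}\exp\bigl\{-\tfrac12\langle Q_t^{-1}(e^{tB}x-y),e^{tB}x-y\rangle\bigr\}.
$$
Multiplying~(\ref{locest}) by $\gamma_\infty(y)$, the lemma is equivalent to the Lebesgue-kernel bound $p_t(x,y)\le C(1-e^{-t})^{-d/2}\exp(-c|x-y|^2/(1-e^{-t}))$ on $L$. I would exploit the smoothness of $t\mapsto Q_t$ and $t\mapsto e^{tB}$ to extract two uniform estimates.

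First, the matrix function $M(t):=Q_t/(1-e^{-t})$ extends continuously from $(0,\infty)$ to $[0,\infty]$: at $t=0$ one has $Q_t=tQ+O(t^2)$ and $1-e^{-t}=t+O(t^2)$, giving $M(0)=Q$; at infinity $M(\infty)=Q_\infty$. Since $M(t)$ takes values in positive definite matrices, compactness yields constants $0<a_1\le a_2$ with $a_1(1-e^{-t})I\le Q_t\le a_2(1-e^{-t})I$ for every $t>0$. This delivers both the prefactor bound $(\det Q_t)^{-1/2}\le a_1^{-d/2}(1-e^{-t})^{-d/2}$ and the lower bound $\langle Q_t^{-1}v,v\rangle\ge a_2^{-1}|v|^2/(1-e^{-t})$ for all $v\in\BRd$.

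Next, I would expand $e^{tB}x-y=(x-y)+(e^{tB}-I)x$ and write
$$
\langle Q_t^{-1}(e^{tB}x-y),e^{tB}x-y\rangle=\langle Q_t^{-1}(x-y),x-y\rangle+2\langle N_t(x-y),x\rangle+\langle Q_t^{-1}(e^{tB}-I)x,(e^{tB}-I)x\rangle,
$$
where $N_t:=(e^{tB^*}-I)Q_t^{-1}$. The first summand is $\ge a_2^{-1}|x-y|^2/(1-e^{-t})$ and the third is nonnegative, so the entire proof reduces to bounding the cross term below by a constant on $L$. Decomposing $2x=(x+y)+(x-y)$ and applying Cauchy--Schwarz gives
$$
|2\langle N_t(x-y),x\rangle|\le\|N_t\|\bigl(|x-y|\,|x+y|+|x-y|^2\bigr)\le 2\|N_t\|,
$$
using $|x-y|\le 1$ and $|x-y||x+y|\le 1$ on $L$. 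The resulting constant can be absorbed into the prefactor $C$.

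The main obstacle is therefore the uniform bound $\sup_{t>0}\|N_t\|<\infty$. Both $e^{tB^*}-I$ (vanishing as $t\to 0$) and $Q_t^{-1}$ (blowing up as $t\to 0$) are singular at the origin, but Taylor expansion gives $N_t=(tB^*+O(t^2))(t^{-1}Q^{-1}+O(1))=B^*Q^{-1}+O(t)$, so $N_t$ extends continuously at $0$. As $t\to\infty$, $e^{tB^*}\to 0$ and $Q_t^{-1}\to Q_\infty^{-1}$, yielding $N_\infty=-Q_\infty^{-1}$. Continuity on $(0,\infty)$ with limits at both endpoints, together with compactness of $[0,\infty]$, secures the required uniform bound and completes the proof.
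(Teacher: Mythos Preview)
Your proof is correct and follows essentially the same route as the paper: both arguments establish $Q_t\asymp(1-\e^{-t})I$ uniformly in $t$, expand $\e^{tB}x-y=(x-y)+(\e^{tB}-I)x$, and control the resulting cross term using the local-region bound $|x-y|\,|x|\le C$. The only cosmetic difference is that the paper first replaces $Q_t^{-1}$ by its scalar bound and then estimates $|\e^{tB}x-y|^2$ directly via $\|\e^{tB}-I\|\le C(1-\e^{-t})$, whereas you keep $Q_t^{-1}$ in the quadratic form and package the same two bounds into the single statement $\sup_t\|N_t\|<\infty$, proved by compactness rather than by explicit eigenvalue estimates.
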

\begin{proof} 
Since the real part of the eigenvalues of $B$ is negative, there exist  positive constants $\alpha\le \beta$ and $C_0$ such that $C_0^{-1}\,\e^{2\alpha s}\mod{x}^2\le C_0\,\mod{\e^{sB^*}}\le \e^{2\beta s}\mod{x}^2$ for all $x\in\BR^d$ and all $s\in \BR$. Thus, by (\ref{Qt}) there exists a  positive constant $C$  such that 
$$
C^{-1}(1-\e^{- t}) I\le Q_t\le C(1-\e^{-t}) I\qquad\forall t\in(0,\infty].
$$ 
and, by (\ref{kernel}), there exist two positive constants $c$ and $C$ such that
\begin{equation}\label{}
h_t(x,y)\le C\,(1-\e^{-t})^{-d/2} \ \gamma_\infty(y)^{-1}\ \exp\left(-c\frac{\mod{e^{tB}x-y}^2}{1-\e^{-t}}\right).
\end{equation}
Now, for all $(x,y)$
 in the local region $L$
\begin{align*}
|e^{tB}x-y|^2&=|x-y+(e^{tB}-I)x|^{2}\\
&=|x-y|^{2}+|(e^{tB}-I)x|^{2}+2\langle x-y, (e^{tB}-I)x\rangle\\
&\geq|x-y|^{2}-2\|e^{tB}-I\||x-y\|x|\\
&\geq|x-y|^{2}-C(1-e^{-t}),
\end{align*}
because $\|e^{tB}-I\|\leq C(1-e^{-t})$ and $\mod{x-y}\,\mod{x}\leq C$ in the local region $L$.
\end{proof}
\begin{proposition}\label{weak1loc}
Let $(\cH_t)_{t\ge 0}$ be a Ornstein-Uhlenbeck semigroup with arbitrary covariance and drift. Then the maximal operator $\cH^{\loc}_*$ is of weak type $1$.
\end{proposition}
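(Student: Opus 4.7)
The plan is to apply Lemma \ref{l:locest} to dominate $\cH^{\loc}_*$ pointwise by a classical local heat-type maximal operator, then invoke a known weak type $1$ estimate with respect to $\gamma_\infty$ for such an operator. First I would use (\ref{locest}) together with the cancellation $\gamma_\infty(y)^{-1}\wrt\gamma_\infty(y)=\wrt y$ to obtain
$$
\cH^{\loc}_*f(x)\le C\sup_{t>0}\int (1-\e^{-t})^{-d/2}\exp\!\left(-c\,\frac{\mod{x-y}^2}{1-\e^{-t}}\right)\1_L(x,y)\,\mod{f(y)}\,\wrt y.
$$
After the change of variables $s=(1-\e^{-t})/(4c)$, the right-hand side is (a constant multiple of) the supremum of the Gauss--Weierstrass heat semigroup applied to $\1_{L_x}\mod{f}$ evaluated at $x$, where $L_x=\set{y:(x,y)\in L}$. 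Since $L_x$ is contained in $B(x,1)$ and, for $\mod{x}$ large, even in a ball of radius $\mathrm{O}(1/\mod{x})$ around $x$, the standard pointwise domination of the Gauss heat maximal function by a Hardy--Littlewood type operator gives
$$
\cH^{\loc}_*f(x)\le C\,\cM^{\loc}(\mod{f})(x),
$$
where $\cM^{\loc}$ is the truncated Hardy--Littlewood maximal operator taking averages over the admissible local balls.

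Next I would exploit the geometry of the local region: for $(x,y)\in L$ one has $\bigmod{\mod{x}^2-\mod{y}^2}=\mod{x-y}\cdot\mod{x+y}\le 1$, so the densities of $\gamma_\infty$ at $x$ and $y$ are comparable with universal constants. This comparability identifies Lebesgue averages on admissible local balls with $\gamma_\infty$-averages on the same balls, up to universal constants, and yields the local doubling property that makes $\cM^{\loc}$ of weak type $1$ with respect to $\gamma_\infty$ via a standard Vitali-type covering argument. This weak type $1$ estimate for the local Hardy--Littlewood operator is exactly the result already used in \cite{GMMST} for the same purpose in the symmetric case; combined with the pointwise bound above it yields the weak type $1$ bound for $\cH^{\loc}_*$.

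The only subtle point is keeping track of the indicator $\1_L(x,y)$ throughout the reduction: the estimate (\ref{locest}) holds only on $L$, so one must verify that the Gauss heat maximal function restricted in this way is indeed controlled by the admissible-radius local Hardy--Littlewood operator to which the weak type argument of \cite{GMMST} applies. Because $L$ is defined precisely by the admissible-radius constraint $\mod{x-y}\le\min(1,\mod{x+y}^{-1})$, this matches up cleanly and is not a genuine obstacle; the essential mechanism is simply the pointwise reduction, already accomplished by Lemma \ref{l:locest}, from the non-symmetric semigroup kernel to the classical Gaussian heat kernel in the local region.
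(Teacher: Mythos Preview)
Your proposal is correct and follows essentially the same route as the paper: apply Lemma \ref{l:locest} to dominate $\cH^{\loc}_*$ by a local heat-type maximal operator against Lebesgue measure, then invoke the machinery of \cite[Section~3]{GMMST} (density comparability on $L$ and the weak type $1$ of the local maximal operator) to transfer to $\gamma_\infty$. The paper compresses all of your second and third paragraphs into a single reference to \cite{GMMST}; one small adjustment is that for arbitrary covariance the density comparison on $L$ reads $\bigmod{\langle Q_\infty^{-1}x,x\rangle-\langle Q_\infty^{-1}y,y\rangle}\le \|Q_\infty^{-1}\|\,\mod{x+y}\,\mod{x-y}\le \|Q_\infty^{-1}\|$ rather than the bare $\bigmod{\mod{x}^2-\mod{y}^2}\le 1$, but this changes nothing.
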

\begin{proof}
By Lemma \ref{l:locest} one has that for each $f\ge0$
\begin{align*}
\cH^{\loc}_* f(x)&\le\ C\ \sup_{0<s\le 1} s^{-d/2} \int\e^{-c\frac{\mod{x-y}^2}{s}}\, \1_L(x,y) \,f(y)\wrt\lambda(y) \\ 
&=\cW f(x),
\end{align*}
say. Since the operator $\cW$ is of weak type $1$ with respect to the Lebesgue measure and its kernel is supported in the local region $L$, the conclusion follows by well-known arguments (see for instance \cite[Section 3]{GMMST}).
\end{proof}
Now we turn to the proof of the weak type estimate for the global part of the maximal operator associated to the semigroup generated by the special Ornstein-Uhlenbeck operator 
$$
\cL(1/2,R)=\frac{1}{2}\Delta-\langle x,\nabla\rangle+\langle Rx,\nabla\rangle,
$$
where  $R$ is a skew-symmetric real matrix. \par
As in Section \ref{s: kernel} we denote by $h_t(x,y)$ and by $h^0_t(x,y)$ the kernels with respect to the invariant measure of the semigroups generated by $\cL(1/2,R)$  and by its symmetric part
$$
\cL^0=\frac{1}{2}\Delta-\langle x,\nabla\rangle,
$$
respectively (see \ref{kernelsimm} and \ref{kernelnor}).
\par
To estimate the semigroup kernel  in the global region, it is convenient to simplify the
expression of $h^{0}_t(x,y)$ by means of  the change of variables in the parameter $t$ introduced in \cite{GMMST}.
We denote by $\tau$ the function defined by
\begin{equation}\label{tau}
\tau(s)=
\log\frac{1+s}{1-s}\qquad s\in(0,1).
\end{equation}
Notice that  $\tau$ maps $(0,1)$ onto $\BR_+.$ It
is straightforward to check (see \cite{GMMST}) that for all $s$ in $(0,1)$
\begin{equation}\label{htau}
h^{0}_{\tau(s)}(x,y)=(4s)^{-d/2}(1+s)^d  \e^{\frac{|x|^2+|y|^2}{
2}-\frac{1}{4}\bigl(s|x+y|^2+\frac{1}{s}|x-y|^2\bigr)}.
\end{equation}
Next, as in \cite{GMMST}, we introduce the quadratic form
\begin{equation}\label{formaQ_s}
\cQ_{s}(x,y)=|(1+s)x-(1-s)y|^{2},\quad x,y\in\BR^{d}.
\end{equation}
Thus
$$
s|x+y|^{2}+\frac{1}{s}|x-y|^{2}=\frac{1}{s}\cQ_{s}(x,y)-2|x|^{2}+2|y|^{2}.
$$
and
\begin{equation}\label{symmQt}
h_{\tau(s)}^{0}(x,y)=s^{-d/2}\exp\left\{|x|^{2}-\frac{1}{4s}\cQ_{s}(x,y)\right\}\quad\forall s\in(0,1).
\end{equation}
\begin{lemma}\label{0t_0glob}
If $t_{0}>0$ is sufficiently small, there exists a positive constant $C$ such that for all $s$ in $(0,\tau^{-1}(t_{0}))$ and all $(x,y)$ in $\BR^{d}\times\BR^{d}$
\begin{equation}\label{GP,1}
h_{\tau(s)}(x,y)\leq Cs^{-\frac{d}{2}}\ \e^{|x|^{2}-\frac{1}{40\,s}\cQ_{s}(x,y)}.
\end{equation}
\end{lemma}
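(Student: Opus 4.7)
The plan is to exploit the kernel factorization from Section \ref{s: kernel}. By Theorem \ref{reduction}, after an orthogonal change of coordinates (which leaves both $|x|^{2}$ and $\cQ_{s}(x,y)$ invariant) we may assume that $R=R(\Theta)$ is in block-diagonal form with angle vector $\Theta=(\theta_{1},\ldots,\theta_{n})$. Combining (\ref{hprod}) with (\ref{symmQt}) we then have
\begin{equation*}
h_{\tau(s)}(x,y)\;\le\;C\,s^{-d/2}\exp\Bigl\{|x|^{2}-\tfrac{1}{4s}\cQ_{s}(x,y)\Bigr\}\prod_{\theta_{j}\neq 0}k_{\tau(s)\theta_{j}}(\xi_{j},\eta_{j}),
\end{equation*}
so it suffices to dominate the product of the correcting factors by $C\exp\bigl(\tfrac{9}{40s}\cQ_{s}(x,y)\bigr)$ for $s\in(0,\tau^{-1}(t_{0}))$ and $t_{0}$ small.

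Taking logarithms in (\ref{K_t}) and applying $1-\cos\alpha\le \alpha^{2}/2$ and $|\sin\alpha|\le|\alpha|$, together with the identity $\frac{\e^{-\tau(s)}}{1-\e^{-2\tau(s)}}=\frac{1-s^{2}}{4s}$ and the elementary estimate $\tau(s)\le 3s$ for $s$ small, each $\log k_{\tau(s)\theta_{j}}(\xi_{j},\eta_{j})$ admits an upper bound of the form $C_{0}\bigl[\theta_{j}^{2}\,s\,|\xi_{j}||\eta_{j}|+\theta_{j}|\xi_{j}\land\eta_{j}|\bigr]$ whenever $t_{0}\theta_{\max}\le 1$. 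Summing over $j$ and using Cauchy--Schwarz in the first term yields
\begin{equation*}
\sum_{\theta_{j}\neq 0}\log k_{\tau(s)\theta_{j}}(\xi_{j},\eta_{j})\;\le\;C_{1}s\,\theta_{\max}^{2}|x||y|+C_{1}\theta_{\max}\sum_{\theta_{j}\neq 0}|\xi_{j}\land\eta_{j}|.
\end{equation*}

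To close the estimate, I would compare the right-hand side above with $\cQ_{s}(x,y)/s$. The expansion
$$\tfrac{1}{s}\cQ_{s}(x,y)=s|x+y|^{2}+\tfrac{1}{s}|x-y|^{2}+2(|x|^{2}-|y|^{2})$$
together with the wedge-product estimate $|\xi_{j}\land\eta_{j}|\le\tfrac{1}{2}|\xi_{j}+\eta_{j}||\xi_{j}-\eta_{j}|$ (a consequence of $(\xi+\eta)\land(\xi-\eta)=-2\xi\land\eta$) pairs each $|\xi_{j}+\eta_{j}||\xi_{j}-\eta_{j}|$ against the terms $s|x+y|^{2}$ and $s^{-1}|x-y|^{2}$ by Young's inequality. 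Choosing the Young parameters so that the total coefficient in front of $\cQ_{s}(x,y)/s$ coming from both the wedge term and the first term $C_{1}s\theta_{\max}^{2}|x||y|$ is at most $9/40$, which is made possible by the extra smallness coming from $\theta_{\max}\cdot(\text{small }t_{0})$, then yields the claimed bound.

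The main obstacle I anticipate is the numerical matching of constants: the generic bound $\theta_{\max}|x||y|$ on the perturbation is not a priori controlled by any fixed multiple of $\cQ_{s}(x,y)/s$, because $\cQ_{s}$ vanishes on the ``diagonal'' $\{(x,y):(1+s)x=(1-s)y\}$ where $|x||y|$ may be large. The resolution is that the coefficient of this generic term actually carries a factor $\tau(s)\theta_{\max}$ which can be made as small as desired by taking $t_{0}$ small, and this extra smallness is exactly what produces the slack between $\tfrac{1}{4s}$ and $\tfrac{1}{40s}$ in the exponentials that we need to beat.
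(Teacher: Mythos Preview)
Your reduction to the block-diagonal form and the factorization via (\ref{hprod})--(\ref{symmQt}) match the paper exactly, as does the observation that it suffices to bound each two-dimensional factor
\[
\exp\Bigl\{-\tfrac{9}{40s}\cQ_s(\xi,\eta)\Bigr\}\,k_{\tau(s)\theta}(\xi,\eta)
\]
by a constant for $s$ small. The gap is in the last step.

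Your upper bound $\log k_{\tau(s)\theta}(\xi,\eta)\le C_0\bigl[\theta^2 s\,|\xi||\eta|+\theta\,|\xi\land\eta|\bigr]$ is correct, but after dividing through by $\frac{1-s^2}{4s}$ the coefficient in front of the wedge term is $\frac{1-s^2}{4s}\,|\sin(\tau(s)\theta)|\sim\frac{\theta}{2}$, which does \emph{not} tend to~$0$ as $t_0\to 0$; only the $(1-\cos)$ term carries the extra factor of $\tau(s)$ that you invoke. Consequently the Young-inequality step cannot be closed for general~$\theta$: with $|\xi|=|\eta|$ and the optimal Young parameter you need $C_1\theta_{\max}\le\tfrac{9}{10}$, which fails once $\theta_{\max}>\tfrac{9}{5}$. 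Moreover, the identity $s|x+y|^2+\tfrac{1}{s}|x-y|^2=\tfrac{1}{s}\cQ_s(x,y)+2(|y|^2-|x|^2)$ leaves an uncontrolled term $|y|^2-|x|^2$ that your sketch does not address. Finally, the cruder bound $C_1s\theta_{\max}^2|x||y|$ on the first term cannot be absorbed into $\tfrac{9}{40s}\cQ_s$ either: on the zero set $(1+s)x=(1-s)y$ of $\cQ_s$ it equals $C_1 s\theta_{\max}^2\tfrac{1+s}{1-s}|x|^2$, which is unbounded in~$x$.

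What is lost in your argument is the \emph{sign} of $\langle\xi,\eta\rangle$. When $\langle\xi,\eta\rangle>0$ the $(1-\cos)$ term contributes \emph{negatively} to $\log k$ and is precisely what compensates the wedge term near the diagonal. The paper exploits this by partitioning $\BR^2\times\BR^2$ into five regions according to the signs of $\langle\xi,\eta\rangle$ and $\xi\land\eta$ and the sizes of $|x-y|/|y|$ and $|\sin\vartheta|$. In the delicate region (both signs favourable, $x$ close to $y$, $\sin\vartheta$ small) one keeps the signed expansion $k_{\tau(s)\theta}\le\exp\{-c_0\tfrac{\theta^2}{4}s\langle\xi,\eta\rangle-c_1\tfrac{\theta}{4}\xi\land\eta\}$ with $c_0,c_1$ close to~$2$, and then shows that the resulting quadratic polynomial in~$s$ has a negative-definite Hessian at the critical point $(s,X,\vartheta)=(0,1,0)$; the condition $c_1^2/c_0<\tfrac{18}{5}$ is exactly what makes this work. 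Your elementary estimates cannot replace this quadratic-form analysis.
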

\begin{proof}
Let $n=\pint{d/2}$. The right hand side of the inequality to prove is invariant under orthogonal transformations. Hence, by Theorem \ref{reduction}, it is enough to prove the inequality for the kernel $h^\Theta_t(x,y)$, with $\Theta=(\vett{\theta}{n})\in\BR^n$, $\theta_j\geq 0$.  \par
By (\ref{hprod}) and (\ref{symmQt})
$$
h^\Theta_{\tau(s)}(x,y)\le \, s^{-d/2}\exp\left\{|x|^{2}-\frac{1}{4s}\cQ_{s}(x,y)\right\}\prod_{\theta_j>0} k_{\tau(s)\theta_j}(\xi_j,\eta_j),
  \quad\forall s\in(0,1),
$$
where $\xi_j=(x_{2j-1},x_{2j})$ and $\eta_k=(y_{2j-1},y_{2j})$ are in $\BR^2$ and each $k_{t\theta_j}$ is a two-dimensional kernel as in  (\ref{K_t}). \par
Define
\begin{equation}\label{E(x,y)}
M_s(x,y)=\exp\left\{-\frac{9}{40\,s}\cQ_{s}(x,y)\right\}\prod_{\theta_j>0}k_{\tau(s)\theta_j}(\xi_j,\eta_j).
\end{equation}
Then
$$
h^\Theta_{\tau(s)}(x,y)\le \, s^{-d/2}\exp\left\{|x|^{2}-\frac{1}{40\,s}\cQ_{s}(x,y)\right\}\ M_s(x,y),
$$
and to conclude the proof of the lemma all we need to show is that there exist a $s_0>0$ sufficiently small and a constant $C$ such that
\begin{equation}\label{E<1}
M_s(x,y)\le C \qquad\forall s\in(0,s_0) \ \forall (x,y)\in \BR^d\times \BR^d.
\end{equation}
Let us denote by $\cQ_s^{(m)}$ the quadratic form defined in (\ref{formaQ_s}) when considered as a function on $\BR^m\times \BR^m$. Then
$$
\cQ_s^{(d)}(x,y)=
\begin{cases}
\sum_{j=1}^n \cQ_s^{(2)}(\xi_j,\eta_j) & {\rm if }\ d\  {\rm is\  even}\\
 \\
\sum_{j=1}^n \cQ_s^{(2)}(\xi_j,\eta_j)+\cQ_s^{(1)}(x_{n+1},y_{n+1}) & {\rm if }\ d\  {\rm is\  odd.}\\
\end{cases}
$$
Thus, since $\cQ_s^{(m)}$ is nonnegative,
$$
M_s(x,y)\le \prod_{\theta_j>0} \exp\left\{-\frac{9}{40\,s}\cQ^{(2)}_{s}(\xi_j,\eta_j)\right\}k_{\tau(s)\theta_j}(\xi_j,\eta_j)
$$
regardless of the parity of $d$.
Hence we only need to show that each factor is bounded, i.e. that for every $\theta> 0$ there exist $s_0\in(0,1)$ and a constant $C$ such that for all  $(x,y)\in \BR^2\times \BR^2$
\begin{equation}\label{E2}
 \exp\left\{-\frac{9}{40\,s}\cQ_{s}(x,y)\right\}k_{\tau(s)\theta}(x,y)\le C\qquad\forall s\in(0,s_0),
\end{equation}
 where now $\cQ_s=\cQ_s^{(2)}$, for the sake of brevity.  \par 
To this end we fix $\beta$ in $(0,1)$, we let $\delta$ be a constant in $(0,1)$ to be chosen later and we denote by $\vartheta=\vartheta(x,y)$ the angle between the two vectors $x$ and $y$. The set $\BR^2\times \BR^2$ is the disjoint union of the five sets
\begin{align*}
R_{1}&=\{(x,y)\in \BR^{2}\times\BR^{2}:\,\langle x,y\rangle<0\}\\
R_{2}&=\{(x,y)\in \BR^{2}\times\BR^{2}:\,\langle x,y\rangle\geq0,\,x\land y\geq0\},\\
R_{3}&=\{(x,y)\in \BR^{2}\times\BR^{2}:\,\langle x,y\rangle\geq0,\,x\land y<0,\,|x-y|\geq\beta|y|\},\\
R_{4}&=\{(x,y)\in \BR^{2}\times\BR^{2}:\,\langle x,y\rangle\geq0,\,x\land y<0,\,|x-y|<\beta|y|,\,|\sin\vartheta|\geq\delta\},\\
R_{5}&=\{(x,y)\in \BR^{2}\times\BR^{2}:\,\langle x,y\rangle\geq0,\,x\land y<0,\,|x-y|<\beta|y|,\,|\sin\vartheta|<\delta\}.
\end{align*}
\par
We shall prove that (\ref{E2}) holds in each region $R_j$, $j=1,\ldots,5$.
Note  that by (\ref{K_t}) and (\ref{tau})
\begin{equation}\label{turna}
k_{\tau(s)\theta}(x,y)=\e^{-\frac{1-s^2}{4s}[(1-\cos(\tau(s)\theta))\langle x,y\rangle+\sin(\tau(s)\theta)\,x\land y]}
\end{equation}
and that the function $s\mapsto\tau(s)$ is positive and increasing in $(0,1)$ and $\tau(s)\sim 2s$ as $s\to 0^+$.
To prove the estimate in $R_{1}$, we observe that there exists a constant $C_1$ such that
\begin{equation}\label{R11}
k_{\tau(s)\theta}(x,y)\leq\exp\{C_{1}|x\|y|\}\quad\forall x,y\in\BR^{2},\,\forall s\in(0,1).
\end{equation}
Since $\cQ_{s}(x,y)\geq(1-s^{2})(|x|^{2}+|y|^{2})$, because $\langle x, y\rangle<0$ in $R_{1}$, we have that if $s_0$ is sufficiently small
\begin{equation}\label{R12}
-\frac{9}{40\,s}\cQ_{s}(x,y)+C_{1}|x\|y|<0\quad\forall(x,y)\in R_{1},\,\forall t\in(0,s_{0}).
\end{equation}
Together (\ref{R11}) and (\ref{R12}) imply (\ref{E2})  in $R_{1}$.
\par
The proof of (\ref{E2})  in $R_2$ is straightforward, because in this region $\cQ_s(x,y)\ge 0$ and $k_{\tau(s)\theta}(x,y)\leq1$.
\par
Next suppose that $(x,y)$ is in $R_{3}$. Since $\langle x,y\rangle\geq 0$, there exists a constant $C_2$ such that 
\begin{align}
k_{\tau(s)\theta}(x,y)&\le\exp\big(C_{2}\mod{x\land y}\big)\nonumber\\
&=   \exp\big(C_{2}\mod{x}\,\mod{y}\,\mod{\sin\vartheta}\big) \qquad\forall s\in(0,1). \label{R3}
\end{align} 
We claim that there exists $s_0\in(0,1)$ such that 
\begin{equation}\label{claimR3}
-\frac{9}{40\,s}\cQ_{s}(x,y)+C_2\mod{x}\,\mod{y}\,\mod{\sin\vartheta}\leq 0\qquad\forall s\in(0,s_0),
\end{equation}
\par
To prove the claim
first consider the case where $|x|\geq|y|$. Then $\cQ_{s}(x,y)\geq|x-y|^{2}$ and  hence,  since $|x-y|\geq|\sin\vartheta||x|$ and $|x-y|\geq\beta|y|$, 
\begin{align*}
-\frac{9}{40\,s}\cQ_{s}(x,y)+C_{2}|x|\,|y|\,\mod{\sin\vartheta}  \le
\left(-\frac{9}{40\,s}\beta+C_2\right)\,\mod{x}\,\mod{y}\,\mod{\sin\vartheta}
\le\  0,
\end{align*}
provided that $s<\frac{9\beta}{40\,C_2}$.\par
Next consider the case where $|x|<|y|$. In this case we have that $\cQ_{s}(x,y)\geq|x-y|^2-2s|y|^2$. Thus, since $|x|<|y|$ and $|x-y|\geq\beta|y|$, 
$$
-\frac{9}{40\,s}\cQ_{s}(x,y)+C_{2}\mod{x}\,\mod{y}\,\mod{\sin\vartheta}\leq\left(-\frac{9}{40\,s}\beta^{2}+\frac{9}{20}+C_2\right) \,|y|^2\le0
$$
provided that $s<\frac{9\beta^2}{40\,C_2+18}$. \par
Thus (\ref{claimR3}) holds for all $(x,y)$ in $R^3$ with $s_0\le \min\set{\frac{9\beta}{40\,C_2},\frac{9\beta^2}{40\,C_2+18} }$. Together (\ref{R3}) and (\ref{claimR3}) imply (\ref{E2})  in $R_{3}$.\\
\par
The proof of  estimate  (\ref{E2})  in $R_{4}$ is similar. Indeed, first of all (\ref{R3}) holds in $R_4$ because here too $\langle x,y\rangle>0$. Moreover, arguing much as before, one can show that  (\ref{claimR3}) holds also for all $(x,y)$ in $R_4$ with $s_0\le \min\set{\frac{9\delta^2}{40\,C_2},\frac{9\delta^2}{40\,C_2+18} }$. The only difference is that one uses the estimates
$$
\cQ_s(x,y)\ge\mod{x-y}^2
\ge\ (\sin\vartheta)^2\,\mod{x}^2\ge \delta^2\,\mod{x}\,\mod{y}
$$
 when $\mod{x}\ge\mod{y}$ and 
 $$
\cQ_s(x,y)\ge |x-y|^2-2s|y|^2 \ge (\sin\vartheta)^2\,\mod{y}^2 -2s\mod{y^2}\ge (\delta^2-2s)\,\mod{y}^2
$$when $\mod{x}<\mod{y}$.
We omit the details. Notice that, so far, we did not need to impose any restriction on $\delta$,  which therefore could be any number in $(0,1)$. 


It remains to estimate $h_{t}(x,y)$ in $R_{5}$. We observe that since $\tau(s)\sim 2s$ as $s\to0^+$ and  $\langle x, y\rangle\geq 0$ and $x\land y<0$  in $R_{5}$, by (\ref{turna}) there exist $s_0>0$ and two positive constants $c_{0}<2<c_{1}$ such that
\begin{align}\label{approx}
k_{\tau(s)\theta}(x,y)\leq\exp\left\{-c_{0}\frac{\theta^{2}}{4}\,s\,\langle x, y\rangle-c_{1}\frac{\theta}{4}\,x\land y\right\} \qquad\forall s\in(0,s_0).
\end{align}
Moreover, we can choose $c_0$ and $c_1$ as close to $2$ as we want, provided that we choose $s_0$ sufficiently small; in particular, we may take 
\begin{equation}\label{c_1^2/c_0}
c_1^2/c_0<18/5.
\end{equation}

Now we are ready to prove estimate (\ref{E2}) in $R_{5}$. Define
$$
E_{s}(x,y)=-\frac{9}{10}\cQ_{s}(x,y)-c_{0}\theta^{2}\,s^2\,\langle x, y\rangle-c_{1}\theta\,s\,x\land y.
$$
By (\ref{approx})
$$
 \exp\left\{-\frac{9}{40\,s}\cQ_{s}(x,y)\right\}k_{\tau(s)\theta}(x,y)\le \exp\left\{\frac{1}{4\,s}E_s(x,y)\right\}.
$$
Thus, to prove (\ref{E2}) in $R_5$ it is enough to show that 
\begin{equation}\label{Es<0}
E_s(x,y)\le 0 \qquad\forall s\in(0,s_0)\ \forall (x,y)\in R_5.
\end{equation}
provided that $s_0$, $\beta$ and $\delta$ are sufficiently small.

 Observe that 
$$
E_s(x,y)=\lambda(x,y)\,s^2+\mu(x,y)\,s+\nu(x,y),
$$
 where
\begin{align*}
&{}\lambda(x,y)=-\frac{9}{10}|x+y|^{2}-c_{0}\theta^{2}\langle x, y\rangle,\\
&{}\mu(x,y)=\frac{18}{10}(|y|^{2}-|x|^{2})-c_{1}\theta\  x\land y,\\
&{}\nu(x,y)=-\frac{9}{10}|x-y|^{2}.
\end{align*}
{\sloppy
It turns out that, instead of $E_s(x,y)$, it is more convenient to consider the function $\mod{x}^{-1}\mod{y}^{-1}\,E_s(x,y)$  because the latter function depends only on the variables $s$, $X=\mod{x}/\mod{y}$ and $\vartheta=\widehat{xy}$. 
}
Indeed, if we denote by $\Psi$ the function defined by $\Psi(s,x,y)=(s,X,\vartheta)$,
\begin{equation}\label{F}
\mod{x}^{-1}\mod{y}^{-1}\,E_s(x,y)=F\big(\Psi(s,x,y)\big),
\end{equation}
where
\begin{equation}\label{}
F(s,X,\vartheta)=\tilde\lambda(X,\vartheta)s^2+\tilde\mu(X,\vartheta)s+\tilde\nu(X,\vartheta),
\end{equation}
and
\begin{align*}
\tilde\lambda(x,y)=&-\frac{9}{10}\,(X+X^{-1}+2\cos\vartheta )-c_0\theta^2\,\cos\vartheta  \\ 
\tilde\mu(x,y)=&\frac{18}{10}(X^{-1}-X)-c_1\theta \sin\vartheta  \\ 
\tilde\nu(x,y)=&-\frac{9}{10}(X+X^{-1}-2\cos\vartheta ).
\end{align*}
 It is easy to see that $(0,1,0)$ is a critical point of $F$ and the Hessian $\nabla^2 F(0,1,0)$ is definite negative because $c_1^2-\frac{18}{5}c_0<0$ by (\ref{c_1^2/c_0}). Thus $(0,1,0)$ is a local maximum of $F$ and, since $F(0,1,0)=0$ there exists a neighbourhood $U$ of $(0,1,0)$ in which $F$ is $\le0$.  Now, since 
 $$
 \Psi\big((0,s_0)\times R_5\big)\subset\set{(s,X,\vartheta): s\in(0,s_0),\ \mod{X-1}<\beta,\ -\delta<\sin(\vartheta)\le0},
 $$
 we can choose $s_0$, $\beta$ and $\delta$ so small that $ \Psi\big((0,s_0)\times R_5\big)\subset U$. Hence $F\circ\Psi\le 0$ in $(0,s_0)\times R_5$. Thus (\ref{Es<0}) is satisfied and the proof of the lemma is complete.\end{proof}
To prove the boundedness of the non-truncated maximal operator we need to assume that the one-parameter group $(\e^{tR})_{t\in\BR}$ generated by the skew-adjoint matrix $R$ is periodic. We recall that if $I$ is an interval contained in $\BR_+$ and $P >0$ we denote by $I_P^\sharp$ the set $\cup_{n\in\BN} (I+ nP)$.
\begin{lemma}\label{Iglob} Suppose that the skew-adjoint matrix $R$ generates a one-para\-meter group $(\e^{tR})_{t\in\BR}$ which  is periodic of period $P$. Then there exist an interval $I$ and  a constant $C$ such that for all $s$ in $\tau^{-1}(I_P^\sharp)$ and all $(x,y)$ in $\BR^d\times\BR^d$
$$
h_{\tau(s)}(x,y)\leq Cs^{-\frac{d}{2}}\ \e^{|x|^{2}-\frac{1}{40 s}\cQ_{s}(x,y)}.
$$
\end{lemma}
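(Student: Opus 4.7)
My plan is to mirror the argument of Lemma \ref{0t_0glob}, exploiting the periodicity hypothesis to reduce the problem to the small-angle regime already handled there. First, by Theorem \ref{reduction}, I may assume $R = R(\Theta)$ with $\Theta = (\theta_1, \ldots, \theta_n)$ and $\theta_j \ge 0$; the desired pointwise bound is invariant under orthogonal conjugation. The hypothesis that $(e^{tR})_{t \in \BR}$ has period $P$ forces $P\theta_j \in 2\pi\BZ$ for every $j$, so writing $t = nP + \delta$ with $\delta \in [0, P)$ gives $\cos(t\theta_j) = \cos(\delta\theta_j)$ and $\sin(t\theta_j) = \sin(\delta\theta_j)$ for every $j$.

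I choose the interval $I = [0, t_0]$, where $t_0 > 0$ is the small constant provided by Lemma \ref{0t_0glob}. For $t \in I_P^\sharp$, writing $t = nP + \delta$ with $\delta \in I$, the formulas (\ref{hprod}) and (\ref{turna}), combined with periodicity, give
\begin{equation*}
h_{\tau(s)}(x, y) = s^{-d/2}\, e^{|x|^2 - \cQ_s(x,y)/(4s)}\, \prod_{\theta_j > 0}\exp\Bigl\{-\tfrac{1-s^2}{4s}\bigl[(1 - \cos(\delta\theta_j))\langle \xi_j, \eta_j\rangle + \sin(\delta\theta_j)\,\xi_j \wedge \eta_j\bigr]\Bigr\}.
\end{equation*}
Defining $M_s(x, y)$ exactly as in the proof of Lemma \ref{0t_0glob} (the Gaussian factor $e^{-\cQ_s/(4s)}$ replaced by $e^{-9\cQ_s/(40s)}$ and multiplied by the same product), the claim reduces to showing $M_s(x,y) \le C$ uniformly for $s \in \tau^{-1}(I_P^\sharp)$ and $(x, y) \in \BR^d \times \BR^d$. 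The case $n = 0$ (i.e.\ $t \in [0, t_0]$) is exactly Lemma \ref{0t_0glob}; what remains is the case $n \ge 1$, in which $s \in [\tau^{-1}(P), 1)$ is uniformly bounded away from $0$ while $\delta\theta_j \le t_0 \theta_j$ remains uniformly small.

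For $n \ge 1$ I would adapt the two-dimensional five-region decomposition $R_1, \ldots, R_5$ of the proof of Lemma \ref{0t_0glob} factor by factor. The estimates in the ``easy'' regions $R_1$ through $R_4$ depend only on uniform upper bounds for the coefficients $\frac{1-s^2}{4s}(1 - \cos(\delta\theta_j))$ and $\frac{1-s^2}{4s}|\sin(\delta\theta_j)|$, each of which is dominated by a constant times $t_0$ on the relevant range of $s$; these bounds hold provided $t_0$ is chosen small enough. The main obstacle is region $R_5$: the original argument relied on a second-order analysis of the function $F(s, X, \vartheta)$ near the critical point $(0, 1, 0)$, whose Hessian was negative definite under the condition $c_1^2/c_0 < 18/5$. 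The analogue here is a function $F(s, \delta, X, \vartheta)$; I would verify that $(s_*, 0, 1, 0)$ is a critical point for every fixed $s_* \in [\tau^{-1}(P), 1]$ (the endpoint $s_* = 1$ is harmless since $k_{t\theta_j} \to 1$ as $s \to 1$), and check that the Hessian in the $(\delta, X, \vartheta)$ directions is negative definite uniformly in $s_*$. Compactness of $[\tau^{-1}(P), 1]$ then yields a single $t_0 > 0$ for which the regional bounds hold simultaneously for all $n \ge 1$.
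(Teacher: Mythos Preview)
Your overall strategy---choose $I=[0,t_0]$, invoke Lemma \ref{0t_0glob} for $n=0$, and for $n\ge1$ exploit that $s\ge s_1:=\tau^{-1}(P)$ while the oscillatory coefficients $1-\cos(\delta\theta_j)$, $\sin(\delta\theta_j)$ are $O(t_0)$---is different from the paper's and can be made to work. The paper instead takes $I=[\delta,(1+\epsilon)\delta]$ bounded \emph{away} from $0$, so that $1-\cos(\theta t)$ has a uniform positive lower bound $c_0$ on $I_P^\sharp$; this allows a discriminant analysis of a quadratic in $s$ using only three regions, with no second-order argument. Your approach has the pleasant feature of recycling Lemma \ref{0t_0glob} for $n=0$; the paper's approach buys a cleaner, more algebraic treatment of the large-$t$ part.

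There is, however, a concrete error in your $R_5$ step. With $F(s,\delta,X,\vartheta)$ obtained as in Lemma \ref{0t_0glob} by dividing the exponent by $|x|\,|y|$, one checks that at $\delta=0$, $X=1$, $\vartheta=0$,
\[
\partial_X F\bigl(s_*,0,1,0\bigr)=-\tfrac{9}{10}\bigl[(1+s_*)^2-(1-s_*)^2\bigr]=-\tfrac{18}{5}\,s_*\ne 0
\]
for $s_*>0$, so $(s_*,0,1,0)$ is \emph{not} a critical point, and the Hessian argument you propose is misplaced. The good news is that $F(s_*,0,1,0)=-\tfrac{18}{5}s_*^2\le -\tfrac{18}{5}s_1^2<0$, so the situation for $n\ge1$ is strictly easier than in Lemma \ref{0t_0glob} (where the value at $(0,1,0)$ was exactly $0$). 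The actual zero of $F(s_*,0,\,\cdot\,,\,\cdot\,)$ sits at $X=(1-s_*)/(1+s_*)\le (1-s_1)/(1+s_1)<1$, $\vartheta=0$; if you choose $\beta<2s_1/(1+s_1)$ this point lies outside $R_5$, whence $F(s_*,0,X,\vartheta)\le -c<0$ uniformly on the compact set $[s_1,1]\times\{(X,\vartheta):|X-1|\le\beta,\ |\sin\vartheta|\le\delta'\}$. Since the $\delta$-perturbation of $F$ is bounded by $(1-s_*^2)\bigl(|1-\cos(\delta\theta)|+|\sin(\delta\theta)|\bigr)\le C\,t_0$, taking $t_0<c/C$ gives $F\le0$ on $R_5$ with no second-order analysis. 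With this correction (and a similarly adjusted choice of $t_0$ in $R_3,R_4$, where the paper's original argument used smallness of $s$ rather than of the $k$-coefficients), your route goes through.
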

\begin{proof}
As in the proof of Lemma \ref{0t_0glob} it is enough to prove the inequality for the kernel $h^\Theta_t(x,y)$, with $\Theta=(\vett{\theta}{d})\in\BR^d$, $\theta_j\geq 0$. 
Let $\set{\vett{\theta}{m}}$ be the nonzero components of $\Theta$, i.e. the absolute values of the nonzero eigenvalues of $R$. Denote by $\theta_{\max}$ the maximum of $\set{\vett{\theta}{m}}$.\par
 Fix $\delta=\min\set{\theta_{\max}^{-1}, 1/10}$ and let $\epsilon$ be a small positive constant ($\epsilon\le 1/10$ will do). Define $I=[\delta,(1+\epsilon)\delta]$. For all $\theta \in\set{\vett{\theta}{m}}$ the functions $t\mapsto\cos(\theta t)$ and $t\mapsto\sin(\theta t)$ are periodic of period $P$ and by considering their Taylor expansions at zero it is easy to see that  for all $\theta\in\set{\vett{\theta}{m}}$
\begin{equation}\label{d2d}
c_0\le 1-\cos(\theta t)\le c_2,\qquad  \sin(\theta t)\le c_1\qquad\forall t \in I_P^\sharp,
\end{equation}
where 
\begin{align}\label{c0c1}
c_0=\frac{5}{12}\theta^2\delta^2\qquad c_1= (1+\epsilon)\delta \theta\qquad{\rm and }\qquad c_2=\frac{(1+\epsilon)^2\delta^2\theta^2}{2}.
\end{align}
Arguing as in the proof of Lemma \ref{0t_0glob}, we may reduce matters 
 to showing that if $\theta\in\set{\vett{\theta}{m}}$ then there exists a constant $C$ such that
 \begin{equation}\label{F2}
 \e^{-\frac{9}{40\,s}\cQ_{s}(x,y)}k_{\tau(s)\theta}(x,y)\le C\qquad\forall s\in \tau^{-1}(I_P^\sharp)\quad \forall (x,y)\in \BR^2\times \BR^2.
\end{equation}
For the sake of the reader we recall that
\begin{align}\label{turna2}
k_{\tau(s)}(x,y)&=\left\{\e^{-\frac{\e^{-t}}{1-\e^{-2t}}\big[\big(1-\cos 
(t\theta)\big)\langle x, y\rangle+\sin (t\theta)\,x\land y\big]}\right\}_{t=\tau(s)} \\ 
&=\e^{-\frac{1-s^2}{4s}[(1-\cos(\tau(s)\theta))\langle x,y\rangle+\sin(\tau(s)\theta)\,x\land y]}.
\end{align}
The set $\BR^2\times\BR^2$ is the disjoint union of the three sets 
\begin{align*}
R_{1}=&\{(x,y)\in \BR^{2}\times\BR^{2}:\,\langle x,y\rangle \geq0,\,x\land y\geq0\}\\
R_{2}=&\{(x,y)\in \BR^{2}\times\BR^{2}:\,\langle x,y\rangle\geq0,\,x\land y<0\},\\
R_{3}=&\{(x,y)\in \BR^{2}\times\BR^{2}:\,\langle x,y\rangle<0\}.
\end{align*}
We shall prove that (\ref{F2}) holds in each region $R_j$, $j=1,2,3$. \par
To prove (\ref{F2})  in $R_{1}$ it is enough to observe that here $k_{t\theta}(x,y)\leq 1$ for all $t$ in $\BR_+$.
\par
 Now suppose that $(x,y)$ is in $R_{2}$. Then, by (\ref{d2d}) and (\ref{turna2}) we have that  
 \begin{align}\label{Fs}
k_{\tau(s)\theta}(x,y)\le \e^{-\frac{1-s^2}{4s}\big(c_0\langle x,y\rangle+c_1\,x\land y\big)}\qquad\forall s\in\tau^{-1}(I_P^\sharp).
\end{align}
Thus
\begin{align*}
 \exp\left\{-\frac{9}{40\,s}\cQ_{s}(x,y)\right\}\ k_{\tau(s)\theta}(x,y)\le \exp\left\{\frac{1}{4\,s}\,F_s(x,y)\right\},
\end{align*}
where
\begin{align*}
F_{s}(x,y)&=p(x,y)\,s^2+q(x,y)\,s+r(x,y)\\
p(x,y)&=-\frac{9}{10}|x+y|^{2}+c_0 \langle x,y\rangle+c_1 \,x\land y,\\
q(x,y)&=\frac{18}{10}(|y|^{2}-|x|^{2}),\\
r(x,y)&=-\frac{9}{10}|x-y|^{2}-c_0\langle x,y\rangle-c_1\,x\land y,.
\end{align*}
Thus, to prove (\ref{F2}) in $R_2$, we only need to show that $F_s(x,y)\le 0$ for all $(x,y)\in R_2$.\par 
It is an easy matter to see that, with $c_0$ and $c_1$ as in (\ref{c0c1}), the leading coefficient $p(x,y)$ and the constant term $r(x,y)$ are negative for all $(x,y)$ in $R_2$. Thus it suffices to show that the discriminant $q^2-4pr$ is nonpositive in $R_2$. 
If $\mod{y}=\mod{x}$ this is obvious, because then $q(x,y)=0$.  If $\mod{y}\not=\mod{x}$, after some simple algebra using the identity
\begin{align*}
|x+y|^{2}|x-y|^{2}=(|y|^{2}-|x|^{2})^{2}+4\sin^2(\vartheta)\,|x|^{2}|y|^{2},
\end{align*}
we see that 
$
(q^{2}-4pr)\,|x|^{-2}|y|^{-2}
$
is only a function of the angle $\vartheta$ between $x$ and $y$. Thus  its sign does not change if we rescale in $x$. In particular we may reduce matters to the case $\mod{y}=\mod{x}$, where $q=0$. This proves that $F_{s}(x,y)\le 0$ for all $(x,y)$ in $R_2$ and $s$ in $\BR$. By  (\ref{Fs}) this implies that (\ref{F2}) holds in $R_2$ .\par\smallskip


Finally suppose that $(x,y)$ is in $R_{3}$. We have that
\begin{align*}
 \exp\left\{-\frac{9}{40\,s}\cQ_{s}(x,y)\right\}\ k_{\tau(s)\theta}(x,y)\le \exp\left\{\frac{1}{4\,s}\,G_s(x,y)\right\},
 \end{align*}
 where
\begin{align*}
G_{s}(x,y)&=\tilde{p}(x,y)\,{s^2}+{q}(x,y)\,s+\tilde{r}(x,y),\\
\tilde{p}(x,y)&=-\frac{9}{10}|x+y|^{2}-c_2\mod{\langle x,y\rangle}+c_1\,\mod{x\land y},\\
q(x,y)&=\frac{18}{10}(|y|^{2}-|x|^{2}),\\
\tilde{r}(x,y)&=-\frac{9}{10}|x-y|^{2}+c_2\mod{\langle x,y\rangle}-c_1\,\mod{x\land y},
\end{align*}
and $c_1$, $c_2$ are as in (\ref{c0c1}). Thus to prove the desired inequality (\ref{F2}), we only need to show that $G_s(x,y)\le 0$ in $R_3$.
Since it is easy to see that  both $\tilde{p}$ and $\tilde{r}$ are negative in $R_{3}$, as before we only need to prove that $q^2-4\tilde{p}\tilde{r}\leq0$ in $R_{3}$. This can be proved by an argument similar to that used  in $R_2$. We omit the details.  \par
Hence (\ref{F2}) holds for all $(x,y)$ in $\BR^2\times\BR^2$. 
This concludes the proof of the lemma. \par
\end{proof}
We recall  two lemmas from \cite{GMMST}.
\begin{lemma}\label{lemma 4.1} Let $\vartheta=\vartheta(x,y)$ denote the angle between the non-zero vectors $x$ and $y$.There exists a constant $C$ such that for all $(x,y)$ in the global region $G$ 
$$
\sup_{0<s\le1}\,s^{-d/2} \,\e^{-\frac{1}{40\,s}\cQ_s(x,y)}\le C\, \min\set{(1+\mod{x})^d, (\mod{x}\sin\vartheta)^{-d}}
$$
\end{lemma}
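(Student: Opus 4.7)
The plan is to reduce the supremum to the elementary identity $\sup_{s>0} s^{-d/2} e^{-A/s} = C_d\,A^{-d/2}$ (the maximum is attained at $s = 2A/d$) by producing two different lower bounds for the quadratic form
\begin{equation*}
\cQ_s(x,y) = |x-y|^2 + 2s(|x|^2-|y|^2) + s^2|x+y|^2,
\end{equation*}
one bound yielding each of the two estimates in the conclusion.

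For the bound $(|x|\sin\vartheta)^{-d}$, work in the two-dimensional plane spanned by $x$ and $y$, with an orthonormal basis $\{e_1,e_2\}$ chosen so that $y = |y|\,e_1$ and the second coordinate of $x$ equals $|x|\sin\vartheta$. The second coordinate of $(1+s)x-(1-s)y$ is then $(1+s)|x|\sin\vartheta$, so
\begin{equation*}
\cQ_s(x,y) \geq (1+s)^2|x|^2\sin^2\vartheta \geq |x|^2\sin^2\vartheta \qquad\forall s \geq 0.
\end{equation*}
Inserting $A = |x|^2\sin^2\vartheta/40$ into the elementary identity gives the claim.

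For the bound $(1+|x|)^d$ one exploits the global region condition $|x-y| \geq \min(1,|x+y|^{-1})$. Split the supremum at $s=1/2$. On $(1/2,1]$ the factor $s^{-d/2}$ is bounded by $2^{d/2}$ and the exponential is $\leq 1$, so there is nothing to prove. On $(0,1/2]$ use the reverse triangle inequality $\cQ_s^{1/2} \geq \bigl||x-y|-s|x+y|\bigr|$ to obtain $\cQ_s \geq |x-y|^2/4$ for $s \leq |x-y|/(2|x+y|)$; the elementary identity then yields a bound of the form $C|x-y|^{-d}$. The global region condition controls $|x-y|^{-d}$ by $C(1+|x|)^d$ after case analysis: when $|x+y| \leq 1$ one has $|x-y| \geq 1$; when $|x+y| > 1$ one has $|x-y| \geq |x+y|^{-1}$, and either $|y| \leq 1+|x|$ (so $|x+y| \leq 1+2|x|$) or $|y| > 1+|x|$ (forcing $|x-y| \geq |y|-|x| > 1$). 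The range of $s$ above the threshold $|x-y|/(2|x+y|)$ is handled by the complementary estimate $\cQ_s \geq (s|x+y|-|x-y|)^2$ or by the completed square $\cQ_s = |x+y|^2(s-s^*)^2 + m$, where $s^* = (|y|^2-|x|^2)/|x+y|^2$ and $m = 4|x|^2|y|^2\sin^2\vartheta/|x+y|^2$ is the global minimum of $\cQ_s$.

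The main obstacle is the transition range near the critical point $s^*$, particularly when $s^* \in (0,1/2]$ makes $m$ small. There a uniform lower bound on $\cQ_s$ of the right size is not available from the triangle inequality alone, and one must feed in the Step 1 projection bound $\cQ_s \geq |x|^2\sin^2\vartheta$ together with the global region condition to close the estimate and match the $(1+|x|)^d$ envelope.
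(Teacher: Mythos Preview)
The paper does not prove this lemma: it is stated there as one of ``two lemmas from \cite{GMMST}'' and invoked without argument. So there is no in-paper proof to compare against; I will instead assess your sketch on its own terms.

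Your treatment of the bound $(|x|\sin\vartheta)^{-d}$ is clean and correct: projecting $(1+s)x-(1-s)y$ onto the line orthogonal to $y$ gives $\cQ_s\ge(1+s)^2|x|^2\sin^2\vartheta\ge|x|^2\sin^2\vartheta$, and the elementary optimisation $\sup_{s>0}s^{-d/2}e^{-A/s}=C_dA^{-d/2}$ finishes it.

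For the $(1+|x|)^d$ bound your decomposition is right, but the ``main obstacle'' you flag is illusory and you have not actually closed it. The missing observation is simply that for $s>s_1:=|x-y|/(2|x+y|)$ one may discard the exponential entirely and bound
\[
s^{-d/2}\le s_1^{-d/2}=\Bigl(\frac{2|x+y|}{|x-y|}\Bigr)^{d/2}\le C\,(1+|x|)^{d},
\]
because in the global region $|x+y|/|x-y|\le C(1+|x|)^2$. Indeed, if $|x-y|\ge 1$ then $|x+y|\le 2|x|+|x-y|$ gives $|x+y|/|x-y|\le 2|x|+1$; if $|x-y|<1$ then the global condition forces $|x-y|\,|x+y|>1$, whence $|x+y|/|x-y|<|x+y|^2$, and $|y|\le|x|+1$ yields $|x+y|\le 2(1+|x|)$. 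This single line replaces your entire ``transition range'' discussion: there is no need to invoke the projection bound, the completed-square form, or any further case analysis near $s^*$. Combined with your (correct) estimate $\cQ_s\ge|x-y|^2/4$ on $(0,s_1]$ and the trivial bound on $(1/2,1]$, the proof of the $(1+|x|)^d$ part is then complete.
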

\begin{lemma}\label{lemma 4.4}
The operator 
$$
\cT f(x)=\e^{\mod{x}^2}\ \int\min\set{(1+\mod{x})^d, (\mod{x}\sin\vartheta)^{-d}}\,f(y)\wrt\gamma_\infty(y)
$$
is of weak type $1$.
\end{lemma}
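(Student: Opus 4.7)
My plan is to exploit polar decomposition together with the Gaussian concentration in the radial direction. Writing $x = r\omega$ and $y = r'\omega'$ with $r,r' \ge 0$ and $\omega,\omega' \in S^{d-1}$, and letting $\vartheta = \vartheta(\omega,\omega')$ denote the angle between $\omega$ and $\omega'$, I note that the kernel depends on $y$ only through $\vartheta$. Introducing the radial integral $F(\omega) = \int_0^\infty f(r'\omega)(r')^{d-1}\e^{-(r')^2}\wrt r'$, which satisfies $\int_{S^{d-1}} F\wrt\sigma = \pi^{d/2}\|f\|_{L^1(\gamma_\infty)}$, I can rewrite
$$
\cT f(r\omega) = \pi^{-d/2}\, \e^{r^2} \int_{S^{d-1}} K_r(\vartheta)\, F(\omega')\wrt\sigma(\omega'),
$$
where $K_r(\vartheta) = \min\{(1+r)^d, (r\sin\vartheta)^{-d}\}$.

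Next, a direct calculation splitting at the crossover angle $\sin\vartheta_0 \sim 1/((1+r)r)$ and integrating over dyadic annuli $\{\sin\vartheta \sim 2^{-k}\}$ yields $\int_{S^{d-1}} K_r\wrt\sigma \le C(1+r)^{2-d}$. Since $K_r$ is invariant under $\omega' \to -\omega'$ and decreasing in $|\sin\vartheta|$, the angular integral against $F$ can be controlled by $C(1+r)^{2-d}\,A(\omega)$, where $A(\omega)$ is an appropriate average of $F$ over caps centered at $\pm\omega$. This produces the pointwise bound $\cT f(r\omega) \le C \e^{r^2}(1+r)^{2-d} A(\omega)$.

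For the level-set analysis I fix $\omega$ and use that $r \mapsto \e^{r^2}(1+r)^{2-d}$ is eventually increasing, so the set $\{r : \e^{r^2}(1+r)^{2-d} A(\omega) > c\lambda\}$ equals a half-line $(r_\omega,\infty)$ with $r_\omega^2 \sim \log(\lambda/A(\omega))$ for $\lambda$ large. The Gaussian tail $\int_{r_\omega}^\infty r^{d-1}\e^{-r^2}\wrt r$ is comparable to $r_\omega^{d-2}\e^{-r_\omega^2}$, which by the defining relation $\e^{r_\omega^2}r_\omega^{2-d} \sim \lambda/A(\omega)$ equals $A(\omega)/\lambda$. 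Integrating over $\omega$ yields
$$
\gamma_\infty\{x: \cT f(x) > \lambda\} \le \frac{C}{\lambda} \int_{S^{d-1}} A(\omega)\wrt\sigma(\omega),
$$
so the proof concludes once $\int A\wrt\sigma \le C\|f\|_{L^1(\gamma_\infty)}$.

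The hard part will be making $A(\omega)$ precise: the naive choice $A(\omega) = \cM_S F(\omega)$, the Hardy--Littlewood maximal function on the sphere, fails because $\cM_S$ is only of weak type $1$, so $\int \cM_S F\wrt\sigma$ can be infinite for $F \in L^1$. The resolution, as in \cite{GMMST}, is to avoid a supremum over angular scales: I would dyadically partition $(0,\infty)$ into intervals $I_j$ on which $r$ and the associated angular scale $1/r^2$ are essentially constant, apply the \emph{fixed-scale} spherical average of $F$ (which preserves $L^1$-norms by Fubini), and sum the contributions from each $I_j$ using the rapid Gaussian decay of $\gamma_\infty(\{|x| \in I_j\})$ to absorb any logarithmic factors produced by the decomposition. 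This careful interleaving of the angular dyadic decomposition with the radial Gaussian concentration is the technical heart of the estimate.
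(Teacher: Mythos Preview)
The paper does not supply its own proof of this lemma: it is introduced with the sentence ``We recall two lemmas from \cite{GMMST}'' and no argument is given. So there is nothing in the present paper to compare against; you are in effect reproving a result quoted from \cite{GMMST}. Your reduction is sound: passing to polar coordinates, collapsing the $y$-radial integration into $F(\omega')$, rewriting $\cT f(r\omega)$ as a spherical convolution of $F$ against $K_r$, the estimate $\int_{S^{d-1}} K_r\,\wrt\sigma \le C(1+r)^{2-d}$, and the Gaussian level-set computation that converts the half-line $\{r>r_\omega\}$ into a factor $A(\omega)/\lambda$ are all correct. You also correctly isolate the real obstruction, namely that the naive pointwise bound by the spherical Hardy--Littlewood maximal function $\cM_S F$ fails because $\cM_S$ is only of weak type $1$ on $S^{d-1}$.

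Where the proposal remains only a sketch is in the resolution of that obstruction. On a fixed radial shell $I_j$ the kernel $K_r$ is \emph{not} concentrated at a single angular scale: it equals $(1+r)^d$ on a cap of radius $\sim r^{-2}$ and then has a tail $(r\sin\vartheta)^{-d}$, so every angular scale between $r^{-2}$ and $1$ contributes. Hence a single ``fixed-scale spherical average'' cannot dominate $\int K_r F\,\wrt\sigma$; one also needs a dyadic angular decomposition of $K_r$, which produces a sum $\sum_{m\ge 0} 2^{-m}\,A_{2^m\vartheta_0(r)}F(\omega)$ of averages at all scales. Each term has $L^1(S^{d-1})$-norm equal to $\|F\|_1$ by Fubini, but the whole sum depends on $r$ through $\vartheta_0(r)$, and one must still verify that the level-set analysis closes after summing in both the angular and the radial index. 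That bookkeeping is exactly what your proposal does not carry out, and it is the substantive content of the lemma in \cite{GMMST}; since the present paper does not reproduce that argument either, your sketch is at the same level of detail as the paper, but it is not yet a complete proof.
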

We are now ready to conclude the proof of Theorem \ref{wt1}
\begin{proof}
Let $A$ denote either the set $[0,t_0]$ or $I_P^\sharp$. By Proposition \ref{weak1loc} the local part of the operator $\cH_{*,A}$ is of weak type $1$. Thus it remains only to prove that the global part is of weak type $1$. By (\ref{HstarG}), Lemma \ref{0t_0glob} ,  Lemma \ref{Iglob} and Lemma \ref{lemma 4.1} the global part of the operator $\cH_{*,A}$ is controlled by the operator $\cT$, which is of weak type $1$ by Lemma \ref{lemma 4.4}.
The conclusion follows by Lemma \ref{RIDUXMAXOP}.
\end{proof}

{\footnotesize
\centerline{\rule{9pc}{.01in}}
\bigskip

\medskip

\medskip
\centerline{Dipartimento di Matematica,
            Universit\`a di Genova,
             Via Dodecaneso 35}
\centerline{16146 Genova, Italy}
\centerline{e-mail: mauceri@dima.unige.it}

}

\end{document}